\newcommand{\sA}{\mathcal{A}}
\newcommand{\ta}{\tilde{a}}
\newcommand{\tA}{\tilde{A}}
\newcommand{\tb}{\tilde{b}}
\newcommand{\tB}{\tilde{B}}
\newcommand{\de}{\delta}
\newcommand{\De}{\Delta}
\newcommand{\tDP}{\tilde{DP}}
\newcommand{\e}{\epsilon}
\newcommand{\sF}{\mathcal{F}}
\newcommand{\g}{\gamma}
\newcommand{\dg}{\dot{\gamma}}
\newcommand{\hg}{\hat{g}}
\newcommand{\tg}{\tilde{g}}
\newcommand{\sG}{\mathcal{G}}
\newcommand{\J}{\mathbb{J}}
\newcommand{\hJ}{\hat{J}}
\newcommand{\Jd}{\Delta(t,x)}
\newcommand{\tk}{\tilde{k}}
\newcommand{\hK}{\hat{K}}
\newcommand{\sK}{\mathcal{K}}
\newcommand{\la}{\lambda}
\newcommand{\vphi}{\varphi}
\newcommand{\R}{\mathbb{R}}
\newcommand{\sR}{\mathcal{R}}
\newcommand{\tR}{\tilde{R}}
\newcommand{\fS}{\mathfrak{S}}
\newcommand{\sS}{\mathcal{S}}
\newcommand{\si}{\sigma}
\newcommand{\Si}{\Sigma}
\newcommand{\sU}{\mathcal{U}}
\newcommand{\tU}{\tilde{U}}
\newcommand{\del}{\partial}
\newcommand{\supp}{\text{supp}\,}
\newcommand{\Id}{\text{Id}}
\newcommand{\Mat}{\text{Mat}}
\newcommand{\Norm}[1]{\left\lVert#1\right\rVert}
\newcommand{\norm}[1]{\lVert#1\rVert}
\newcommand{\ds}{\displaystyle}
\newcommand{\nin}{\noindent}
\newtheorem{theorem}{Theorem}
\newtheorem*{theorem1}{Theorem 1}
\newtheorem*{theorem2}{Theorem 2}
\newtheorem{lemma}[theorem]{Lemma}
\newtheorem{corollary}[theorem]{Corollary}
\def\@setaddresses{\par
\nobreak \begingroup
\def\author##1{\nobreak\addvspace\bigskipamount}%
\def\\{\unskip, \ignorespaces}%
\interlinepenalty\@M
\def\address##1##2{}%
\def\email##1##2{}%
\def\curraddr##1##2{\begingroup
\@ifnotempty{##2}{\nobreak\indent{\itshape Current address}%
\@ifnotempty{##1}{, \ignorespaces##1\unskip}\/:\space
##2\par}\endgroup}%
\def\urladdr##1##2{\begingroup
\@ifnotempty{##2}{\nobreak\indent{\itshape URL}%
\@ifnotempty{##1}{, \ignorespaces##1\unskip}\/:\space
\ttfamily##2\par}\endgroup}%
\addresses
\endgroup
}
\begin{document}

\title[Franks' lemma for geodesic flows]{An elementary proof of Franks' lemma for geodesic flows}
\thanks{This material is based upon parts of the author's Ph.D. thesis, as well as work supported by the National Science Foundation under Grant Number NSF 1045119.}

\author{Daniel Visscher}

\address{Department of Mathematics, University of Michigan, Ann Arbor, MI, USA }

\date{}

\keywords{Franks' lemma, geodesic flow, perturbation, linear Poincar\'e map}
\subjclass[2010]{37C10, 53D25, 34D10}

\begin{abstract}
Given a Riemannian manifold $(M,g)$ and a geodesic $\gamma$, the perpendicular part of the derivative of the geodesic flow $\phi_g^t: SM \rightarrow SM$ along $\gamma$ is a linear symplectic map. We give an elementary proof of the following Franks' lemma, originally found in \cite{CP02} and \cite{Con10}: this map can be perturbed freely within a neighborhood in $Sp(n)$ by a $C^2$-small perturbation of the metric $g$ that keeps $\gamma$ a geodesic for the new metric. Moreover, the size of these perturbations is uniform over fixed length geodesics on the manifold. When $\dim M \geq 3$, the original metric must belong to a $C^2$--open and dense subset of metrics.
\end{abstract}

\maketitle

\section{Introduction}
\label{intro}

The derivative of a diffeomorphism or flow along an orbit carries substantial information about the dynamics along that orbit. For instance, suppose $x$ is a periodic point for a diffeomorphism $f$ of period $n$. If the eigenvalues of $D_xf^n$ have modulus bounded away from $1$, then the orbit is called hyperbolic. In this case, provided the eigenvalues have non-zero real part, the Hartman--Grobman theorem states that $f^n$ is topologically conjugate to its linearization $D_xf^n$ in a neighborhood of $x$. That is, all topological dynamical information is contained in the derivative at the hyperbolic fixed point. One question this type of analysis raises is how the linearization along an orbit depends on the dynamical system.

A Franks' lemma is a tool that allows one to freely perturb the derivative of a diffeomorphism or flow along a finite piece of orbit. The name alludes to a lemma proved by John Franks for diffeomorphisms in \cite{Fr71}, in which the desired linear maps along an orbit are pasted in via the exponential map. This type of result is important in the study of stable properties of a dynamical system, since it allows one to equate stablility over perturbations in a space of diffeomorphisms or flows with stability over perturbations in a linear space. Franks' lemmas have since been proven and used in other contexts; see, for instance, \cite{BDP03} for conservative diffeomorphisms, \cite{Arn02}, \cite{HT06} and \cite{ALD13} for symplectomorphisms, \cite{MPP04} and \cite{BGV06} for flows, \cite{BR08} for conservative flows, \cite{Viv05} for Hamiltonians, and \cite{CP02} and \cite{Con10} for geodesic flows. A priori, more restricted settings are more difficult to work with. For instance, using a Franks' lemma for Hamiltonians, one can perturb the derivative of the geodesic flow along an orbit by perturbing the generating Hamiltonian function, but the new Hamiltonian flow may not be a {\em geodesic} flow (coming from a Riemannian metric on a manifold).

This paper provides an elementary proof of the Franks' lemma for geodesic flows on surfaces found in \cite{CP02}, and its higher--dimensional analogue as found in \cite{Con10}. In the latter, the author notes that this Franks' lemma is ``the main technical difficulty in the paper.'' One aim of the present paper is to make this result more intuitive.


Let $(M,g)$ be a closed Riemannian manifold, $SM$ the sphere bundle\footnote{The geodesic flow is usually defined on the unit tangent bundle, but this space is not preserved under perturbations of the metric. It is clear that the sphere bundle can be naturally identified with the unit tangent bundle for any metric, however, and we will make this identification when talking about $SM$.} over $M$, and $\phi_g: \R \times SM \rightarrow SM$ the geodesic flow for the metric $g$. Since $M$ is compact, there is some length $\ell>0$ for which any geodesic segment of length less than $\ell$ has no self--intersections. We will assume for convenience that $\ell=1$ (this can be achieved by scaling). 

Consider, then, a length--$1$ geodesic $\g$, and its path $\dg \subset SM$. Pick local hypersurfaces $\Si_0$ and $\Si_1$ in $SM$ that are transverse to $\dg(t)$ at $t=0$ and $t=1$, respectively. This allows us to define a Poincar\'e map $P: \Si_0 \supset U \rightarrow \Si_1$, where $U$ is a neighborhood of $\dg(0)$, taking $\xi \in U$ to $\phi_g^{t_1}(\xi)$, where $t_1$ is the smallest positive time such that $\phi_g^{t_1}(\xi) \in \Si_1$. One can use the Implicit Function Theorem and the fact that $\phi_g^t$ is differentiable to show that $P$ is differentiable, with derivative $DP: T_{\dg(0)}\Si_0 \to T_{\dg(1)} \Si_1$.

The map $DP$ contains information about the dynamics along $\g$. If $\g$ is closed (i.e. $\dg(T)=\dg(0)$ for some $T>0$), and $\Si_1=\Si_0$, then the eigenvalues of $DP$ determine whether $\g$ is hyperbolic, elliptic, degenerate, or otherwise. For a closed orbit, this is an invariant of the section $\Si_0$ chosen, since a different section yields a new map that is linearly conjugate to the original and therefore has the same eigenvalues. For non-closed geodesic segments as above, we will consider only hypersurfaces orthogonal to  $\dg$, which allows us make sense of $DP$ as a linear symplectic map (since $\phi_g^t$ preserves a symplectic form). Moreover, putting coordinates on a neighborhood of $\g$ will allow us to write $DP$ as an element of $Sp(n) = \{ A \in Mat_{2n}(\R) | A^T \J A = \J \}$, where $$\J = \left[ \begin{array}{cc}
0 & I_n \\
-I_n & 0 \end{array} \right].$$

We first consider a Franks' lemma for geodesic flows on surfaces, where the proof techniques are particularly simple and apply to any metric. Let $M$ be a compact manifold, and $\sG^r(M)$ the set of $C^r$ metrics on $M$ equipped with the $C^r$ topology. For a given path $\g$ on $M$, let $\sG_\g^r(M)$ be the set of $C^r$ metrics on $M$ for which $\g$ is a geodesic. The following theorem (``Franks' lemma for geodesic flows on surfaces'') states that on any surface $(S,g)$, the linear map $DP$ along any length--$1$ geodesic segment can be freely perturbed in a neighborhood inside $Sp(1)$ by a $C^2$--small perturbation of the metric.

\begin{theorem}[\cite{CP02}]\label{FLGF-S}
Let $g \in \sG^4(S)$ and let $\sU$ be a neighborhood of $g$ in $\sG^2(S)$.  Then there exists $\de = \de(g,\sU) >0$ such that for any simple geodesic segment $\g$ of length $1$, each element of $B(DP(\g,g),\de) \subset Sp(1)$ is realizable as $DP(\g, \tg)$ for some $\tg \in \sU \cap \sG_\g(S)$.  Moreover, for any tubular neighborhood $W$ of $\g$ and any finite set $\sF$ of transverse geodesics, the support of the perturbation can be contained in $W \setminus V$ for some small neighborhood $V$ of the transverse geodesics $\sF$.
\end{theorem}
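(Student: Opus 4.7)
My approach is to parametrize admissible metric perturbations (those keeping $\g$ a geodesic) by perturbations of a single scalar function on $\g$---the Gaussian curvature---reduce $DP$ to the time-$1$ flow of a $2\times 2$ Jacobi ODE, and then invoke the implicit function theorem using the fact that $Sp(1)=SL(2,\R)$ is only three-dimensional. The main obstacle is obtaining $\de$ uniform in $\g$, which I handle by compactness of $SM$.

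\emph{Step 1: Fermi coordinates and curvature perturbation.} In Fermi coordinates $(t,s)$ on a tubular neighborhood $W$ of $\g$, the metric takes the form $g=dt^2+f(t,s)^2\,ds^2$ with $f(t,0)=1$, $f_s(t,0)=0$, and the Gaussian curvature along $\g$ equals $K(t)=-f_{ss}(t,0)$. Fix once and for all a smooth bump $\psi(s)$ supported in a thin slab $|s|<r$ inside $W$ with $\psi(0)=\psi'(0)=0$ and $\psi''(0)=1$. For $h\in C_c^\infty((0,1))$ define
\[
\tg_h:=dt^2+\bigl(f(t,s)+h(t)\psi(s)\bigr)^2\,ds^2.
\]
This metric agrees with $g$ off the support of $h(t)\psi(s)$; since $\tilde f(t,0)=1$, the $t$-axis $\g$ remains a $\tg_h$-geodesic parametrized by arc length; and $\norm{\tg_h-g}_{C^2}\le C\norm{h}_{C^2}$ with $C$ depending only on $g$ and $\psi$. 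A brief calculation with $K=-f_{ss}/f$ yields the new Gaussian curvature along $\g$ as $\tK(t)=K(t)-h(t)$.

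\emph{Step 2: Surjectivity of the linearization.} In a parallel orthonormal frame along $\g$, the perpendicular part of $DP(\g,\tg_h)$ is identified with the time-$1$ fundamental solution $\Psi_h(1)\in Sp(1)$ of the $2\times 2$ system equivalent to $J''+\tK(t)J=0$. Let $\Psi(t)$ be the unperturbed fundamental solution, whose columns $(a,a')^T,(b,b')^T$ satisfy $a(0)=b'(0)=1$, $a'(0)=b(0)=0$ and the Wronskian relation $ab'-a'b\equiv 1$. Variation of parameters yields
\[
\frac{d}{d\e}\bigg|_{\e=0}\Psi_{\e h}(1)=\Psi(1)\int_0^1 h(t)\,M(t)\,dt,\qquad M(t)=\begin{pmatrix}ab & b^2\\-a^2 & -ab\end{pmatrix}.
\]
Each $M(t)$ is trace-free, and the three scalar functions $a^2,ab,b^2$ are linearly independent on $[0,1]$: from $\alpha a^2+\beta ab+\gamma b^2\equiv 0$, evaluation at $t=0$ forces $\alpha=0$, then $\beta a+\gamma b\equiv 0$ wherever $b\ne 0$, and differentiating together with the Wronskian relation forces $\beta=\gamma=0$. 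Hence the linearization $h\mapsto\int_0^1 hM\,dt$ surjects onto the tangent space of $Sp(1)$ at $\Psi(1)$, and choosing three bumps $h_1,h_2,h_3\in C_c^\infty((0,1))$ whose images form a basis makes the smooth map $F_\g:(c_1,c_2,c_3)\mapsto DP(\g,\tg_{c_1h_1+c_2h_2+c_3h_3})$ a local diffeomorphism at $0\in\R^3$.

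\emph{Step 3: Uniformity and support.} The main technical obstacle is producing $\de$ independent of $\g$. The space of length-$1$ geodesic segments is parametrized by the compact manifold $SM$, and $(a_\xi,b_\xi)$ depend continuously on $\xi\in SM$, so the nondegeneracy of a fixed triple $(h_1,h_2,h_3)$ at some $\xi_0$ is an open condition in $\xi$. For each $\xi_0\in SM$ choose a triple that works at $\xi_0$; finitely many of the resulting open sets cover $SM$, and on each piece the quantitative inverse function theorem provides a radius $\de_j>0$ uniform over that piece, so that $\de:=\min_j\de_j$ is the desired constant. Finally, the $h_i$ may be placed with support in any chosen subinterval of $(0,1)$ and $\psi$ in any thin slab around $s=0$, so by avoiding the finitely many $t$-values where $\g$ meets the transverse geodesics in $\sF$, the perturbation can be arranged to have support in $W\setminus V$.
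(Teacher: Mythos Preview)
Your overall strategy---parametrize admissible perturbations by the curvature along $\g$, linearize $DP$ via the variation-of-constants formula, and invoke the inverse function theorem together with compactness of $SM$---is correct and close in spirit to the paper's proof, though the execution differs. The paper perturbs the Jacobi field $a$ directly by three explicit bump functions chosen to move $a'(1)$, $a(1)$, and $b(1)$ separately, and then reads off the induced curvature change from the Jacobi equation; you instead perturb the curvature and use the linear independence of $a^2,\,ab,\,b^2$ to see that the linearization surjects onto $T_{DP}Sp(1)$. Your route is arguably slicker for surfaces, while the paper's explicit bump constructions are what generalize to the higher-dimensional Theorem~\ref{FLGF-HD}.

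Two points need attention. First, the metric form in Step~1 is reversed. In Fermi coordinates along $\g$ (with $t$ the arc-length parameter on $\g$ and $s$ the normal coordinate) the metric is $g=f(t,s)^2\,dt^2+ds^2$, not $dt^2+f^2\,ds^2$; it is the former that gives $K(t)=-f_{ss}(t,0)$. In the form you wrote, the Gaussian curvature is $-f_{tt}/f$, and the perturbation $\tg_h=dt^2+(f+h\psi)^2\,ds^2$ leaves the curvature along $\g$ \emph{unchanged}, so nothing happens to $DP$. With the corrected form $(f+h\psi)^2\,dt^2+ds^2$, your computations in Steps~1--2 go through as written.

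Second, your handling of the ``moreover'' clause has a genuine gap. Your metric estimate is $\norm{\tg_h-g}_{C^2}\le C\norm{h}_{C^2}$, so staying inside $\sU$ constrains the $C^2$ norm of $h$. To avoid the (arbitrarily many) intersection points with $\sF$ you must either cut the $h_i$ off near those points---which drives $\norm{h_i}_{C^2}$ to infinity as the cutoff sharpens---or confine the $h_i$ to short subintervals, which degrades the nondegeneracy of $DF_\g(0)$ and hence the inverse-function-theorem radius. Either way your $\de$ ends up depending on $\sF$, contrary to the statement. The paper avoids this by building the perturbed metric from Jacobi fields on the perpendicular geodesics, which yields $\norm{\tg-g}_{C^2}\le C\norm{\tk-k}_{C^0}$ (only the $C^0$ norm of the curvature enters), and then proving a separate lemma that switching a $C^0$-bounded curvature perturbation off on a set of small measure moves $DP$ only a little. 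Your construction can be repaired along similar lines---e.g.\ by shrinking the transverse support of $\psi$ in tandem with the $t$-cutoff width so that $\norm{h\psi}_{C^2}$ stays bounded---but this requires an argument you have not given.
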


A couple of notes on the statement of the theorem. Distance in $Sp(1)$ comes from using coordinates to identify it with $\R^3$ (and the Euclidean norm on $\R^3$), while the $C^2$ distance in the space of metrics comes from fixing a coordinate system and using the $C^2$ norm on the metric matrix component functions. Theorem~\ref{FLGF-S} states that $\de$ can be chosen uniformly over all geodesics of length $1$---it depends only on the metric $g$ (more specifically, the bounds on its curvature), and the neighborhood $\sU$. As noted in \cite{CP02}, this can be relaxed to geodesics of length $\ell$ in some interval $[a,b]$, but then $\de = \de(g,\sU,a,b)$ depends on the upper and lower bounds of the length. The assumption that the original metric is $C^4$ is used to imply that the curvature $K$ is $C^2$, which is needed for the estimates of Lemma \ref{comp}.

The proof techniques for Theorem \ref{FLGF-S} can be generalized to higher dimensions, but not for every metric. In particular, we need the dynamics of the geodesic flow along $\g$ to do some of the work for us, since the higher--dimensional analogues of the perturbations that we use to prove Theorem \ref{FLGF-S} do not produce a full dimensional ball in $Sp(n)$. The suitable metrics constitute the set $\sG_1$, which is proven to be $C^2$ open and $C^\infty$ dense in $\sG^2(M)$ in \cite{Con10}.

\begin{theorem}[\cite{Con10}]\label{FLGF-HD}
Let $g \in \sG^4(M) \cap \sG_1$ and let $\sU$ be a neighborhood of $g$ in $\sG^2(M)$.  Then there exists $\de = \de(g,\sU) >0$ such that for any simple geodesic segment $\g$ of length $1$, each element of $B(DP(\g,g),\de) \subset Sp(n)$ is realizable as $DP(\g, \tg)$ for some $\tg \in \sU \cap \sG_\g(M)$.  Moreover, for any tubular neighborhood $W$ of $\g$ and any finite set $\sF$ of transverse geodesics, the support of the perturbation can be contained in $W \setminus V$ for some small neighborhood $V$ of the transverse geodesics $\sF$.
\end{theorem}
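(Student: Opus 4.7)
The plan is to run the same strategy as for Theorem \ref{FLGF-S}, reducing the statement to a question about perturbations of the Jacobi equation along $\g$, and to invoke $g \in \sG_1$ exactly where the surface--case argument fails to produce every direction in $T_{DP}\,Sp(n)$. Choose Fermi coordinates $(t, x_1, \ldots, x_n)$ along $\g$, so that $\g$ is the $t$-axis and $g_{ij}(t,0) = \de_{ij}$, $\del_k g_{ij}(t,0) = 0$. Then $DP(\g,g) = \Psi(1)$, the time-$1$ fundamental solution of the perpendicular Jacobi equation
\[
\xi'(t) = H_R(t)\,\xi(t), \qquad H_R(t) = \begin{pmatrix} 0 & I_n \\ -R(t) & 0 \end{pmatrix} \in \mathfrak{sp}(n),
\]
where $R(t)$ is the $n \times n$ symmetric perpendicular curvature operator of $g$ along $\g$.

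Mimicking the surface argument, I would use $C^2$--small perturbations $\tg = g + h$ with $\supp h$ in a thin tube about $\g$ and with $h$, $\del h$ vanishing on $\g$ (so $\g$ remains a geodesic for $\tg$). Applying Lemma \ref{comp} componentwise to $R$, the second normal derivatives of $h$ along $\g$ can be prescribed to make $\tR(t) - R(t) = \rho(t)$ an arbitrary $C^0$--small symmetric matrix function on $[0,1]$, with $\norm{h}_{C^2} \leq C(g)\,\norm{\rho}_{C^0}$ uniformly in $\g$. To first order, such an $h$ changes $DP$ by
\[
\de DP = \Psi(1) \int_0^1 \Psi(t)^{-1} \begin{pmatrix} 0 & 0 \\ -\rho(t) & 0 \end{pmatrix} \Psi(t)\, dt \;\in\; T_{DP}\,Sp(n).
\]

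The image of $\rho \mapsto \de DP$ is $\Psi(1)$ times the span of $\mathrm{Ad}(\Psi(t)^{-1})(\mathfrak{p})$ as $t$ ranges over $[0,1]$, where $\mathfrak{p} \subset \mathfrak{sp}(n)$ is the abelian subalgebra of matrices of the form $\begin{pmatrix} 0 & 0 \\ -\rho & 0 \end{pmatrix}$ with $\rho^T = \rho$; it has dimension $n(n+1)/2$, while $\dim \mathfrak{sp}(n) = n(2n+1)$. For $n=1$ the $\mathrm{Ad}$--orbit already fills $\mathfrak{sp}(1)$ for every $g$, which is why Theorem \ref{FLGF-S} holds unconditionally; for $n \geq 2$ this can fail. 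The hypothesis $g \in \sG_1$ is designed precisely so that the subspaces $\mathrm{Ad}(\Psi(t)^{-1})(\mathfrak{p})$ still span all of $\mathfrak{sp}(n)$ as $t$ varies over $[0,1]$, with a uniform lower bound on the norm of the inverse of the resulting surjection across every length-$1$ geodesic $\g$ in $SM$. This quantitative spanning is the main obstacle; I would obtain the uniformity in $\g$ from compactness of $SM$ combined with the $C^2$--openness of $\sG_1$.

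With the quantitative surjectivity in hand, the theorem follows from the quantitative inverse function theorem applied to the smooth map $h \mapsto DP(\g, g+h)$ on a finite--dimensional affine slice of perturbations realizing the prescribed $\rho$: this produces a radius $\de = \de(g,\sU) > 0$, independent of $\g$, such that every element of $B(DP(\g,g),\de) \subset Sp(n)$ is attained by some $\tg \in \sU \cap \sG_\g(M)$. The localization statement is obtained by choosing the bumps supporting $h$ to lie in $W \setminus V$, which at worst shrinks $\de$.
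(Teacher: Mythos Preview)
Your outline is on the right track and the Lie--algebraic formulation via $\mathrm{Ad}(\Psi(t)^{-1})(\mathfrak p)$ is a clean way to phrase the derivative of $\rho \mapsto DP$. But the sentence ``The hypothesis $g \in \sG_1$ is designed precisely so that the subspaces $\mathrm{Ad}(\Psi(t)^{-1})(\mathfrak p)$ still span all of $\mathfrak{sp}(n)$ \ldots'' is not a proof; it is the entire content of the theorem. In the paper $\sG_1$ is \emph{not} defined by a spanning condition: it is the set of metrics for which every length--$\tfrac12$ geodesic segment carries a point at which the curvature operator $R$ has $n$ distinct eigenvalues, with a uniform lower bound $H(g)>0$ on the eigenvalue gaps. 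You must connect that concrete hypothesis to your spanning claim. Expanding $\mathrm{Ad}(\Psi(t)^{-1})$ at a point where $R$ is diagonalized, the first two orders of iterated brackets of $H_R$ with $\mathfrak p$ produce the symmetric blocks $A'$, $A$, $B$, while the third order contributes a diagonal block whose antisymmetric part is $[\rho,R]$; the map $\rho \mapsto [\rho,R]$ from symmetric to skew--symmetric matrices is onto exactly when $R$ has simple spectrum, and its inverse is controlled by $H(g)^{-1}$. This is precisely what the paper's Perturbation~IV computes by hand: two symmetric bumps $\psi_3^{ij}$ placed at the two ends of a short interval $[t_0,t_0+d]$ cancel to leading order, and the surviving term in $\De A$ is $\int \Psi^{ij} I(\lambda)$, whose antisymmetric part has entries $\tfrac12(\lambda_j-\lambda_i)\,\e\de^3 d$. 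Without supplying this computation (or an equivalent), your argument has no mechanism to reach the $\tfrac12 n(n-1)$ skew directions, and no source for the uniform lower bound over $\g$.

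Two smaller points. Your appeal to Lemma~\ref{comp} ``componentwise'' is misplaced: that lemma is about the surface construction via normal Jacobi fields $\hJ_t(x)$, and does not generalize directly. In higher dimensions the paper instead writes the perturbed metric as $g_{00}(t;x)-2\,\De R_{kl}(t)\,x^k x^l$ in Fermi coordinates and bumps it off; the $C^2$ estimate is then immediate from the quadratic dependence on $x$. Finally, your last clause ``which at worst shrinks $\de$'' is too weak: the theorem requires $\de=\de(g,\sU)$ independent of $\sF$, and this matters for the application to closed geodesics in Corollary~\ref{FLGF-S-Cor}. The paper handles this by showing (via variation of parameters) that switching the curvature perturbation off on a set of arbitrarily small measure perturbs $DP$ by an arbitrarily small amount, so no loss in $\de$ is incurred.
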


F. Klok proved a similar result in \cite{Klo83}, where he is in fact able to perturb the $k$-jet of the Poincar\'e map along any geodesic in any direction for a $C^{k+1}$--open and dense set of metrics. This result does not contain the uniformity of the size of the perturbation over the choice of geodesic, however, which is a necessary component for \cite{CP02} and \cite{Con10}.

This result can be applied to segments along any finite--length geodesic (e.g. closed geodesics) to assemble a perturbation over the whole geodesic. Such a geodesic may intersect itself many times on the manifold, and in order to keep the curve $\g$ a geodesic in the new metric, one should avoid changing the metric at the intersection points. In this case, the second statement in Theorems \ref{FLGF-S} and \ref{FLGF-HD} regarding the support of these perturbations is important in order to assemble them along a closed (or finite length) geodesic, as done in \cite{CP02} and \cite{Con10}, to yield Corollary \ref{FLGF-S-Cor}. Since a closed geodesic may have non--integer length, we recall that the above theorems can be applied to geodesics of length $\ell \in [a,b]$, with $\de = \de(g,\sU,a,b)$ depending on the upper and lower bounds of the length.

\begin{corollary}[\cite{Con10},\cite{CP02}]\label{FLGF-S-Cor}
Let $g$ be as in Theorem \ref{FLGF-S} or \ref{FLGF-HD}, and let $\sU$ be a neighborhood of $g$ in $\sG^2(M)$. Then there exists $\de = \de(g,\sU) >0$ such that for any prime closed geodesic $\g$, there is an integer $m=m(\g)>0$ such that $\g$ is the concatanation of segments $\g_1, \ldots \g_m$ and any element of the product of the balls of radius $\de$ about $DP(\g_i, g)$ in $Sp(n)^m$ is realizable as $\prod_{i=1}^m DP(\g_i, \tg)$ for some $\tg \in \sU$.
\end{corollary}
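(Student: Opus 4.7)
The plan is to subdivide $\gamma$ into $m$ segments whose lengths lie in a fixed compact interval bounded away from $0$ and $\infty$, apply Theorem~\ref{FLGF-HD} (or Theorem~\ref{FLGF-S}) to each segment individually, and then superimpose the resulting perturbations, relying on the support refinement built into those theorems to keep the individual perturbations from interfering.

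Given a prime closed geodesic $\gamma$ of length $L(\gamma)$, first pick $m = m(\gamma)$ so that $L(\gamma)/m \in [a,b]$ for some fixed interval $[a,b] \subset (0,1]$ (for concreteness $[a,b] = [1/3,1]$), and cut $\gamma$ into $m$ arcs $\gamma_1,\ldots,\gamma_m$ of equal length $L(\gamma)/m$. A prime closed geodesic on a compact manifold has only finitely many self-intersections, so by a small adjustment of the cut points we may assume that no endpoint of any $\gamma_i$ lies on a self-intersection of $\gamma$; each $\gamma_i$ is then a simple segment. By the uniformity clause noted after Theorem~\ref{FLGF-HD}, a single $\delta = \delta(g,\sU,a,b) > 0$ then works for every such $\gamma_i$.

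Next, choose very thin tubular neighborhoods $W_i$ of the $\gamma_i$, thin enough that $W_i$ meets $\gamma_j$, for $j \neq i$, only in short arcs near the transverse self-intersection points of $\gamma$. Apply Theorem~\ref{FLGF-HD} to $\gamma_i$ with tubular neighborhood $W_i$ and transverse family $\sF_i = \{\gamma_j : j \neq i\}$: this produces a perturbation $h_i$ of $g$, supported in $W_i \setminus V_i$ for some neighborhood $V_i$ of $\bigcup_{j \neq i} \gamma_j$, that realizes any prescribed $A_i \in B(DP(\gamma_i,g),\delta)$ as $DP(\gamma_i, g + h_i)$. By construction the supports of the $h_i$ are pairwise disjoint, since $\supp h_i$ lies in a small neighborhood of $\gamma_i$ yet is excluded from a neighborhood of every $\gamma_j$ with $j \neq i$.

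Finally, set $\tg = g + \sum_{i=1}^m h_i$. Because the $h_i$ have disjoint supports, the $C^2$-size of $\tg - g$ equals $\max_i \|h_i\|_{C^2}$, so shrinking each $h_i$ slightly if needed gives $\tg \in \sU$; and $\tg \in \sG_\gamma(M)$ since each $h_i$ keeps $\gamma$ a geodesic. In a neighborhood of each $\gamma_i$ the metric $\tg$ coincides with $g + h_i$, so $DP(\gamma_i,\tg) = A_i$, and hence $\prod_{i=1}^m DP(\gamma_i,\tg) = \prod_{i=1}^m A_i$ is the prescribed element of $Sp(n)^m$. The only real obstacle is the possibility of self-intersections of $\gamma$, which is precisely why the second clauses of Theorems~\ref{FLGF-S} and \ref{FLGF-HD} are essential; everything else is uniform-length subdivision and independent local perturbation.
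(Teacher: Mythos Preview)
Your approach is exactly the one the paper indicates: subdivide into pieces of length in a fixed interval $[a,b]$, invoke the theorems on each piece with $\de=\de(g,\sU,a,b)$, and use the support--refinement clause to make the local perturbations non--interfering. The paper itself only sketches this (in the paragraph preceding the corollary, deferring details to \cite{CP02} and \cite{Con10}), so your outline is already more explicit than what appears there.

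One genuine imprecision: you take $\sF_i=\{\g_j:j\neq i\}$ and claim that a thin enough $W_i$ meets $\g_j$ only near transverse self--intersections. This fails for the neighbors $\g_{i\pm1}$, which share an endpoint with $\g_i$ and are \emph{tangent} there, not transverse; no thinning of $W_i$ excludes them, and the ``finite set of transverse geodesics'' clause of Theorems~\ref{FLGF-S} and~\ref{FLGF-HD} does not apply to them. The standard fix is to apply the theorem not to $\g_i$ itself but to a slightly shorter sub-arc $\g_i'\subset\g_i$ obtained by trimming a fixed amount $\e'>0$ from each end. Then the tubular neighborhoods $W_i'$ are pairwise disjoint for $\eta$ small (adjacent ones are now separated along $\g$), the transverse clause is needed only for the genuine self--intersections of $\g$, and the uniform bounds on the short unperturbed factors $DP(\g,g;t_i-\e',t_i+\e')$ are absorbed into $\de$. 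With this adjustment your argument goes through as written.
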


Note that now the uniformity of the perturbation shows up as the \emph{radius} of a ball in $Sp(n)^m$, whose \emph{volume} decreases as $m$ grows (for $\de<1$). That is, the size of the perturbation along the geodesic $\g$ is proportional to the number of pieces it must be cut up into to apply Theorem \ref{FLGF-S} or \ref{FLGF-HD} (along with, as above, $g$, $\sU$, and the length of the pieces of geodesic).

\medskip
\section{Proof of Franks' lemma for geodesic flows on surfaces}
\label{FLGF-SS}

This section contains a proof of Theorem \ref{FLGF-S}, using Jacobi fields as the intermediary between the dynamics along $\g$ (i.e. the map $DP$) and the Riemannian metric on a surface $S$.

\begin{theorem1}[\cite{CP02}]
Let $g \in \sG^4(S)$ and let $\sU$ be a neighborhood of $g$ in $\sG^2(S)$.  Then there exists $\de = \de(g,\sU) >0$ such that for any simple geodesic segment $\g$ of length $1$, each element of $B(DP(\g,g),\de) \subset Sp(1)$ is realizable as $DP(\g, \tg)$ for some $\tg \in \sU \cap \sG_\g(S)$.  Moreover, for any tubular neighborhood $W$ of $\g$ and any finite set $\sF$ of transverse geodesics, the support of the perturbation can be contained in $W \setminus V$ for some small neighborhood $V$ of the transverse geodesics $\sF$.
\end{theorem1}

The proof is organized as follows. We construct three curves of metrics in $\sG^2(S)$ passing through $g$ with the property that the images of these curves in $Sp(1)$ under the map $DP(\g, \cdot): \sG^2(S) \rightarrow Sp(1)$ span the tangent space at $DP(\g,g)$. Then the Inverse Function Theorem provides the desired open ball. In Section \ref{pertDP}, we use a relation between the map $DP=DP(\g,g)$ and Jacobi fields along $\g$ to effect a desired perturbation to $DP \in Sp(1)$ by a $C^0$-small perturbation of the curvature $k$ along $\g$. In Section \ref{createmetric}, we build a metric $\tg$ that has the perturbed curvature along $\g$ with the perturbation supported in an arbitrarily small tubular neighborhood of $\g$, and show that $\tg$ is $C^2$-close to the original metric $g$. Then, in Section \ref{avoid}, we show that we can avoid perturbing the metric in a small neighborhood of a finite set of transverse geodesics by switching off and on the perturbation of $k$ very quickly, having a very small effect on Jacobi field values (negligable compared to the size of perturbations). 

Assume that for every geodesic segment of length $1$ on $(S,g)$, the Jacobi field $a$ defined by $a(0)=1$, $a'(0)=0$ is uniformly bounded away from zero on $\g$ (the uniformity is over all such geodesic segments); this can be done because the curvature $K$ takes a maximum value on the compact surface $S$. Further, assume that the injectivity radius of $S$ is at least $2$, so that every geodesic segment of length $1$ will necessarily be non-self-intersecting.  All of this can be achieved by scaling.

A special set of coordinates is well-adapted to studying the dynamics of $\phi_g^t$ along a fixed geodesic, which we will define here more generally on an $(n+1)$--dimensional Riemannian manifold $(M,g)$. Given a non self-intersecting geodesic $\g$ of finite length, say from $\g(0)$ to $\g(1)$, define a set of Fermi coordinates in a tubular neighborhood of $\g$ as follows.  At $t=0$, choose a set of $n$ vectors $\{ e^i \}$ so that $\{ \dot{\g}, e^1, \ldots, e^n \}$ is an orthonormal basis at $T_{\g(0)}M$, and parallel transport this basis to create a frame $\{ \dot{\g}(t), e^1(t), \ldots, e^n(t) \}$ along $\g$.  Exponentiating this frame onto the manifold yields a map $\Phi: [0,1] \times \R^n \rightarrow M$ given by 
$$\Phi(t;{\bf x}) = \exp_{\g(t)} \left[ \sum_{i=1}^n x_i e^i(t) \right].$$  
Since this map has full rank at $\Phi(t; 0)$, it is a diffeomorphism onto a neighborhood $U$ of~$\g$, and so defines coordinates $(t; x_1, \ldots, x_n)$ on $U$.

\subsection{Perturbing $DP$ by perturbing $k$}
\label{pertDP}

On the surface $S$, fix a set of Fermi coordinates $\{ (t,x) \}$ along $\g$, with coordinate neighborhood $U$. A normal Jacobi field along $\g(t) = (t,0)$ is a multiple of $\frac{\del}{\del x}$, and so can be written $J(t) \frac{\del}{\del x}$ with $J(t)$ a scalar. The map $DP=DP(\g, g)$ takes the following form on a normal Jacobi field $J$ along $\g$ (\cite{Pat99}):
$$DP(J(0),J'(0)) = D_{\dg(0)}\phi_g^1(J(0),J'(0)) = (J(1),J'(1)).$$
We will write the pair $(J(t),J'(t))$ as a column vector below, which makes $DP$ into a $2\times2$ matrix. Let $a$ be the Jacobi field defined by $a(0)=1$, $a'(0)=0$ and $b$ the Jacobi field defined by $b(0)=0$, $b'(0)=1$. Then
$$DP = \left[\begin{array}{cc} a(1) & b(1) \\ a'(1) & b'(1)\end{array}\right].$$
Since $DP \in Sp(1)$ and $\dim Sp(1)=3$, one can write $b'(1)$ in terms of $a(1)$, $a'(1)$, and $b(1)$, we will be concerned only with perturbing these latter three values.

Assume that the length of $\g$ is short enough that $a$ is uniformly (over all such $\g$) bounded away from $0$ by $a_{lb}>0$.  Given a nondegenerate solution of a second order ordinary differential equation, a standard reduction of order procudure allows one to write down any other solution in terms of the first. Applied to the Jacobi equation, this yields:
\begin{lemma}[e.g., \cite{Es77}]
Given any non-singular Jacobi field $a(t)$ along $\g$, any other Jacobi field $z(t)$ along $\g$ can be written as 
$$z(t) = a(t) \left[ c_1 \int_0^t a^{-2}(s)ds + c_2 \right]$$
for some constants $c_1, c_2$.
\end{lemma}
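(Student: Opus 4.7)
The plan is to invoke the classical reduction-of-order trick for second-order linear ODEs, applied to the scalar Jacobi equation on the surface. Since $\g$ lies on a surface, every normal Jacobi field $J$ along $\g$ satisfies
$$J''(t) + K(t)J(t) = 0,$$
where $K(t) = K(\g(t))$ is the Gaussian curvature evaluated along $\g$. In particular, both $a$ and $z$ satisfy this same linear equation, and the solution space is $2$-dimensional.

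First, I would write $z(t) = a(t) u(t)$, which is well-defined since $a$ is nowhere vanishing on the relevant interval. Substituting into the Jacobi equation,
$$z'' + Kz = (a''+Ka)u + 2a'u' + au'' = 2a'u' + au'',$$
where the first group vanishes because $a$ is itself a Jacobi field. So $u$ must satisfy
$$a(t)u''(t) + 2a'(t)u'(t) = 0.$$
This is a first-order linear ODE in $v := u'$, namely $v'/v = -2a'/a$, which integrates to $v(t) = c_1 a(t)^{-2}$ for some constant $c_1$. Integrating once more from $0$ yields
$$u(t) = c_1 \int_0^t a^{-2}(s)\,ds + c_2,$$
and hence the claimed formula for $z(t) = a(t)u(t)$.

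Finally, I would check that this family exhausts all Jacobi fields. A direct computation, or the linearity of the construction, shows that every pair $(c_1,c_2) \in \R^2$ produces a genuine Jacobi field; conversely, the map $(c_1,c_2) \mapsto (z(0), z'(0)) = (c_2 a(0),\, c_2 a'(0) + c_1 a(0)^{-1})$ is a linear isomorphism $\R^2 \to \R^2$ (its determinant is $-1$, coming from the Wronskian of the two basic solutions $a$ and $a \int_0^t a^{-2}$). Since the space of Jacobi fields along $\g$ is $2$-dimensional and matches any prescribed initial data $(z(0),z'(0))$, every Jacobi field arises in this form.

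There is no serious obstacle here; the only mild point of care is verifying that $a$ is indeed bounded away from $0$ so that $a^{-2}$ is integrable and $u$ is well-defined, but this is exactly the standing assumption $a \geq a_{lb} > 0$ made at the start of Section~\ref{pertDP}.
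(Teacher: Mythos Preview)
Your proof is correct and is precisely the standard reduction-of-order argument the paper alludes to; the paper itself does not give a proof but simply cites the result as a consequence of this procedure applied to the Jacobi equation.
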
  
\nin The constants $c_1$ and $c_2$ can be determined by the initial conditions on $z$; for the Jacobi field $b$, we have $c_1=1$ and $c_2=0$.

Let $C>0$ be a constant such that, for $\e>0$ small enough, we can choose three positive $C^\infty$ functions $\psi_i: [0,1] \rightarrow \R$ with the following properties:\\
\begin{center}
{\def\arraystretch{1.4}
\begin{tabular}{|c|c|c|}
\hline
$\psi_1$                                                                            & $\psi_2$                                                                          & $\psi_3$          \\ \hline
$\supp(\psi_1) \subseteq \left] \frac{3}{4},1 \right]$   & $\supp(\psi_2) \subseteq \left] \frac{3}{4},1 \right]$ & $\supp(\psi_3) \subseteq \left] \frac{1}{4},\frac{1}{2} \right[$ \\
$a-\psi_1 > 0$                                                                 & $a-\psi_2 > 0$                                                                &  $a-\psi_3 > 0$          \\
$\psi_1'(1)=\e$                                                                 & $\psi_2(1)=\e, \; \psi_2'(1)=0$                                    & $\ds \int_0^1\psi_3 \, ds = \e$       \\
$\norm{\psi_1}_{C^2}\leq C\e$                                     & $\norm{\psi_2}_{C^2}\leq C\e$                                  & $\norm{\psi_3}_{C^2}\leq C\e $       \\
\includegraphics[width=1.35in]{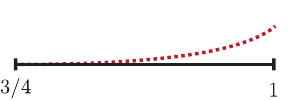}             & \includegraphics[width=1.35in]{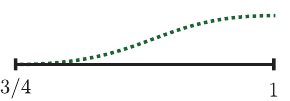}             & \includegraphics[width=1.35in]{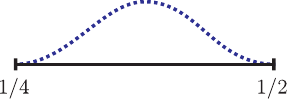}   \\  \hline
\end{tabular} }
\end{center} 
\vspace{.5cm}
Note that it is possible to find such functions because the support of $\psi_i$ is independent of $\e$.

Let $\ta_i = a-\psi_i$, as shown in Figure 1. Declaring $\ta_i$ to be a Jacobi field along $\g$ determines a new curvature function $\tk_i$ along $\g$ and a Jacobi field $\tb_i$ with initial conditions $\tb_i(0)=0$, $\tb_i'(0)=1$. The perturbation $\ta_1$ has the property that
$$\ta_1'(1)-a'(1) = \e.$$
The perturbation $\ta_2$ satisfies $\ta_2'(1) = a'(1)$ while 
$$\ta_2(1)-a(1) = \e.$$
The perturbation $\ta_3$ has the properties $\ta_3'(1) = a'(1)$ and $\ta_3(1) = a(1)$, while
\begin{align}
\tb_3(1)-b(1) &= \ta_3(1) \int_0^1 \ta_3^{-2}(s) \, ds - a(1) \int_0^1 a^{-2}(s) \, ds \notag\\
       &= a(1) \int_0^1 \frac{2a\psi_3-\psi_3^2}{a^2(a-\psi_3)^2} \, ds \notag\\
       &= a(1) \int_0^1 \frac{2a-\psi_3}{a^2(a-\psi_3)^2} \, \psi \; ds \notag\\
       &\geq a(1) \int_0^1 \frac{a}{a^4}\,\psi_3 \; ds = a(1) \int_0^1 \frac{1}{a^3}\,\psi_3 \; ds \notag\\
       &\geq \frac{a_{lb}}{a_{ub}^3} \,\e ,\notag
\end{align}
where $a_{ub}$ denotes an upper bound and $a_{lb}$ a lower bound of $a$ on $\g$, uniformly chosen over geodesic segments of length $1$.  Thus declaring $\ta_3$ a Jacobi field perturbs $b(1)$ by at least a constant times $\e$, with the constant depending only on the upper and lower bounds of $a$ along $\g([0,1])$.

\begin{figure}[ht]\label{perturbations}
\includegraphics[width=.7\textwidth]{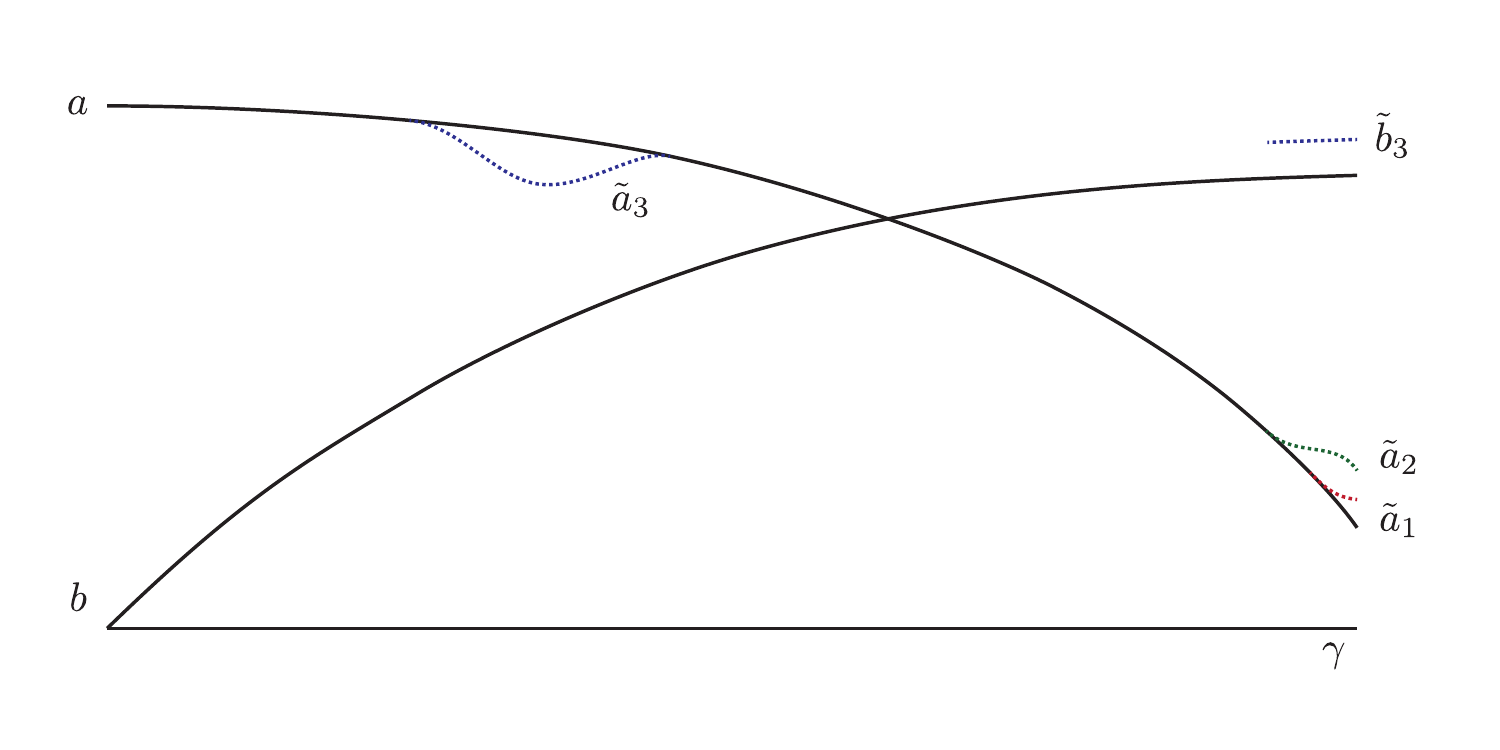}
\caption[Perturbing Jacobi fields $a$ and $b$]{Perturbing Jacobi fields $a$ and $b$.}
\end{figure}

We claim that the perturbation to the curvature function along $\g$ that produces the above perturbations to Jacobi fields is small. Consider the curvature $k(t)=K(t,0)$ along $\g$.  Declaring $\ta_i$ to be a Jacobi field gives a new curvature $\tk_i$ along $\g$.  From the Jacobi equation we have $$\tk_i = -\frac{\ta_i''}{\ta_i} = -\frac{a''-\psi_i''}{a-\psi_i}.$$  Then
\begin{align}
\norm{\tk_i-k}_{C^0} &= \Norm{ \frac{-(a''-\psi_i'')-(a-\psi_i)k}{a-\psi_i} }_{C^0} \notag\\
                                     &= \Norm{ \frac{-a''+\psi_i''+a''+\psi_i k}{a-\psi_i} }_{C^0} \notag\\
                                     &= \Norm{ \frac{\psi_i''+\psi_i k}{a-\psi_i} }_{C^0} \notag\\
                                     &\leq \frac{(1+\norm{k}_{C^0})C}{(a-\psi_i)_{lb}} \,\e, \notag
\end{align}
where $(a-\psi_i)_{lb}$ denotes a lower bound of $a-\psi_i$ on $\g$, uniform over such geodesic segments (e.g. for small enough $\e$, can take $(a-\psi_i)_{lb} = a_{lb} - C\e>0$).  Hence we need only perturb $k$ by $O(\e)$, with the constant depending only on the curvature $k$, the constant $C$, and the lower bound of $a-\psi_i$.

The Inverse Function Theorem now produces a ball of some radius $\de>0$ about $DP$.  Let $\sK(\g)$ be the Banach space of Gaussian curvatures along $\g$ (equivalently, the space of continuous functions on $[0,1]$), and let $\Phi: \sK(\g) \rightarrow  Sp(1)$ be the map assigning to each curvature $\tk$ the map $DP(\g,\tg)$, where $\tg$ is a metric for which $\g$ is a geodesic and $\tk$ is the curvature of $\tg$ along $\g$. (This is well--defined because $DP$ is determined by the Jacobi fields $a$ and $b$, whose values are determined by the curvature.) The three one-parameter families $\si_i(k; s) = k + s (\tk_i - k)$, for $i=1,2,3$, define a $3$-dimensional subspace $\fS \subset \sK(\g)$ since $\tk_i - k$ are independent functions over $\R$.\footnote{It is clear $\tk_3-k$ is independent from the other two, since the perturbation is supported on a different interval.  To see that $\tk_2-k$ and $\tk_1-k$ are independent over $\R$, note that $\tk_1-k$ is nonvanishing, while $\tk_2-k$ must vanish at some point in $]\frac{3}{4}, 1[$ in order to make $\psi_2'(1)=0$.}  Then the derivative of $\Phi|_\fS$ at $k$, in coordinates determined by the curves $\si_i$ on $\fS \subset \sK(\g)$ and $a'(1), a(1), b(1)$ on $Sp(1)$, is given by
$$(D\Phi|_\fS)k = 
\left[
\begin{array}{p{.5cm} p{.5cm} p{1.8cm}}
$\e$ & $0$ & $\;\;\;\;\;\;\; 0$ \\
$*$ & $\e$ & $\;\;\;\;\;\;\; 0$ \\
$*$ & $*$ & $\tb_3(1)-b(1)$ 
\end{array}
\right]. $$
By the Inverse Function Theorem, $\Phi|_\fS$ is a local diffeomorphism, so that the image of a neighborhood of $k$ under $\Phi|_\fS$ contains a ball of some radius $\de>0$ about $DP=DP(\g, g)$ in $Sp(1)$. Moreover, $\de$ can be set independently of $\g$ because all of the above computations depend continuously on the curvature $k$ and the space of geodesic segments on $S$ of length one is compact.

\subsection{Perturbing $k$ by perturbing $g$}
\label{createmetric}

For any curvature $\tk$ $C^0$--close to $k$, we can construct a metric $\tg$ supported in a tubular neighborhood $W$ of $\g$ with curvature $\tk$ along $\g$. This uses a well--known construction using Jacobi fields on the geodesics eminating perpendicularly from $\g$. We will show that $\tg$ is $C^2$--close to $g$, and that this distance is independent of $W$. First, extend $\tk$ to the Fermi coordinate neighborhood $U$ by setting $\hK(t,x) = \tk(t)$ (i.e., the new curvature is constant in the $x$-direction).  For each $t$, let $\hJ_t(x)$ be the Jacobi field satisfying $\hJ_t''(x)+\hK(t,x)\hJ_t(x)=0$ with initial conditions $\hJ_t(0)=1$, $\hJ_t'(0)=0$, and similarly for $J_t(x)$ with $K(t,x)$ (the curvature for the metric $g$).  In Fermi coordinates the metric $g$ takes the form 
$$g(t, x) = \left[\begin{array}{cc} J_t(x)^2 & 0 \\ 0 & 1\end{array}\right]$$
on $U$.  The desired metric around $\g$ is
$$\hg(t, x) = \left[\begin{array}{cc} \hJ_t(x)^2 & 0 \\ 0 & 1\end{array}\right],$$
which we will interpolate with $g$ to get a metric $\tg$.  For notational convenience, set $\Jd = \hJ_t(x) - J_t(x)$ and let $\Norm{\Jd}_{C^r, W}$ be the $C^r$ norm of $\Jd |_W$.  Let $\varphi$ be a $C^2$ bump function such that
$$\varphi(x)=\begin{cases} 1 & |x| < 1/4 \\ 0 & |x| > 1, \end{cases}$$ 
and set $\varphi_\eta(x) = \varphi (x/\eta)$.  Then for the tubular neighborhood  $W = [0,1] \times (-\eta,\eta) \subset U$, define a new metric $\tg$ with $\supp (\tg - g) \subset W$ by
$$\tg_{00}(t,x) = \left[(1-\varphi_\eta(x))J_t(x) + \varphi_\eta(x)\hJ_t(x)\right]^2$$ 
$$\tg_{10}(t,x) = \tg_{01}(t,x) = g_{01}(t,x)$$ 
$$\tg_{11}(t,x) = g_{11}(t,x).$$

Notice that a smaller tubular neighborhood (i.e. smaller $\eta$) means larger $C^2$ norm of $\varphi_\eta$, but smaller $\Norm{\Jd}_{C^r,W}$.  These effects cancel, as demonstrated below by calculating estimates of $\norm{\tg - g}_{C^2}$ and showing that this quantity does not depend on $W$.  One the one hand, note that (for $\eta<1$)
$$\norm{\varphi_\eta(x)}_{C^0} = \norm{\varphi(x)}_{C^0},$$
$$\norm{\varphi_\eta(x)}_{C^1} \leq \eta^{-1}\norm{\varphi(x)}_{C^1},$$
$$\norm{\varphi_\eta(x)}_{C^2} \leq \eta^{-2}\norm{\varphi(x)}_{C^2}.$$
On the other hand,

\begin{lemma}\label{comp}
For $\eta$ small enough,
$$\norm{\Jd}_{C^0,W} \leq 2 \eta^2 \norm{\tk-k}_{C^0},$$
$$\norm{\Jd}_{C^1,W} \leq 2 \eta \norm{\tk-k}_{C^0},$$
$$\norm{\Jd}_{C^2,W} \leq 2 \norm{\tk-k}_{C^0}.$$
\end{lemma}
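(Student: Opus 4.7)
The plan is to derive a second-order linear ODE satisfied by $\Delta(t,x) = \hJ_t(x) - J_t(x)$ in the $x$-variable (with $t$ a parameter) and then obtain all three estimates by direct integration together with a small Gronwall-type absorption. Subtracting the two Jacobi equations produces
\[
\partial_x^2 \Delta(t,x) + \hK(t,x)\, \Delta(t,x) = \bigl(K(t,x) - \hK(t,x)\bigr)\, J_t(x),
\]
with the zero initial data $\Delta(t,0) = 0$ and $\partial_x \Delta(t,0) = 0$. Since $\hK(t,x) = \tk(t)$ is independent of $x$ while $K(t,0) = k(t)$, Taylor expansion in $x$ gives $K(t,x) - \hK(t,x) = (k(t) - \tk(t)) + O(\eta)$ on $W$, so for $\eta$ sufficiently small the forcing is effectively of size $\norm{\tk - k}_{C^0}$.

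For the $C^0$ bound I would use the integral representation $\Delta(t,x) = \int_0^x (x-s)\, \partial_x^2 \Delta(t,s)\, ds$, substitute the ODE, and apply $|x| < \eta$ to get
\[
\norm{\Delta}_{C^0, W} \leq \tfrac{\eta^2}{2}\bigl( \norm{\hK}_{C^0} \norm{\Delta}_{C^0, W} + \norm{K - \hK}_{C^0, W} \norm{J_t}_{C^0, W}\bigr).
\]
For $\eta$ small the first term on the right absorbs into the left side, and since $\norm{J_t}_{C^0, W}$ is close to $1$, this yields $\norm{\Delta}_{C^0, W} \leq 2\eta^2 \norm{\tk - k}_{C^0}$, with the factor $2$ providing slack for the Taylor remainder and the absorbed constants. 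The $C^1$ bound splits into two pieces: single integration $\partial_x \Delta(t,x) = \int_0^x \partial_x^2 \Delta(t,s)\, ds$ loses one factor of $\eta$, while $\partial_t \Delta$ satisfies an analogous inhomogeneous ODE (obtained by differentiating the $\Delta$-equation in $t$) with zero initial data and forcing of the same form, so the same analysis gives $\partial_t \Delta = O(\eta^2)\, \norm{\tk - k}_{C^0}$, comfortably within the $\eta\, \norm{\tk - k}_{C^0}$ budget. For the $C^2$ bound, $\partial_x^2 \Delta$ is controlled directly by the ODE; mixed and $\partial_t^2$ derivatives satisfy related ODEs with additional factors of $\eta$. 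The assumption $g \in \sG^4$ (so $K \in C^2$) is precisely what allows these $C^2$ estimates.

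The main technical subtlety will be tracking the $O(\eta)$ Taylor remainder $K(t,x) - K(t,0)$ carefully so that it absorbs into the stated constant $2$ and does not contaminate the dependence on $\norm{\tk - k}_{C^0}$. The bookkeeping ultimately succeeds because the powers $\eta^2, \eta, 1$ in the three estimates exactly balance the $\eta^0, \eta^{-1}, \eta^{-2}$ growth of $\norm{\vphi_\eta}_{C^r}$, so the $\eta$-dependence cancels when $\norm{\tg - g}_{C^2}$ is finally assembled, yielding a bound independent of the width $\eta$ of the tubular neighborhood $W$.
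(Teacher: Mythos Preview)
Your approach is correct in spirit but genuinely different from the paper's, and one step is not quite right as written.

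The paper does not derive or differentiate any ODE. Instead it argues purely by continuity: since $\Delta$, $\partial_t\Delta$, $\partial_x\Delta$, $\partial_t^2\Delta$, and $\partial_t\partial_x\Delta$ all vanish identically on $\gamma$ (because $\Delta(t,0)\equiv 0$ and $\partial_x\Delta(t,0)\equiv 0$), the only nonzero entry of $D^2\Delta$ on $\gamma$ is $\partial_x^2\Delta(t,0)=k(t)-\tk(t)$. As $D^2\Delta$ is continuous (this is where $g\in\sG^4$ enters), uniform continuity on a compact neighborhood gives $\norm{D^2\Delta}_{C^0,W}\leq 2\norm{\tk-k}_{C^0}$ once $\eta$ is small enough. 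The $C^1$ and $C^0$ bounds then follow by a single integration in $x$ each, using that $D\Delta$ and $\Delta$ vanish on $\gamma$. This avoids entirely the bookkeeping of differentiated ODEs and the Taylor remainder $K(t,x)-K(t,0)$.

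Your Gronwall route works for the $x$-derivatives, but your treatment of $\partial_t\Delta$ slips: differentiating the $\Delta$-equation in $t$ produces forcing whose leading part at $x=0$ is $(k-\tk)'(t)$, not $(k-\tk)(t)$, so the analysis does \emph{not} literally give $\partial_t\Delta=O(\eta^2)\norm{\tk-k}_{C^0}$. You instead get $O(\eta^2)\norm{(\tk-k)'}_{C^0}$, and must then take $\eta$ small enough (now depending on $\norm{(\tk-k)'}/\norm{\tk-k}$) to fit this inside the $2\eta\norm{\tk-k}_{C^0}$ budget. The same issue recurs, with $(\tk-k)''$, for $\partial_t^2\Delta$ in the $C^2$ estimate. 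The conclusions survive because of the extra powers of $\eta$, but your exposition hides a real dependence of the $\eta$-threshold on higher $t$-derivatives of $\tk-k$. The paper's soft argument sidesteps this by never differentiating in $t$: it bounds $D^2\Delta$ once and integrates down, with the $\eta$-threshold coming only from a modulus of continuity.
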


\begin{proof}
First, we claim that $\Jd = \hJ_t(x) - J_t(x)$ is a $C^2$ function on $U$.  Since $g$ is a $C^4$ metric, $K$ is $C^2$ so that $J_t(x)$ is $C^2$.  Moreover, $a$ is $C^4$ and $\psi$ is $C^\infty$, so that $\tk=-\frac{a''-\psi''}{a-\psi}$ is $C^2$.  Thus $\hK$ is $C^2$ and $\hJ_t(x)$ is $C^2$, so $\Jd$ is $C^2$.  Then $D^2\Jd$ is continuous on $U$, and we can choose $W$ thin enough so that $$\norm{D^2\Jd}_{C^0,W} \leq 2 \norm{\tk-k}_{C^0}.$$  Hence $D\Jd$ grows at a rate at most $2 \norm{\tk-k}_{C^0}$ in the $x$-direction on $W$, and 
\begin{align}
\norm{D\Jd}_{C^0,W} &\leq 2 \eta \norm{\tk-k}_{C^0} + \norm{D\Jd}_{C^0,\g} \notag\\
                                       &= 2 \eta \norm{\tk-k}_{C^0}. \notag
\end{align}
Similar reasoning shows
\begin{align}
\norm{\Jd}_{C^0,W} &\leq 2 \eta^2 \norm{\tk-k}_{C^0} + \norm{\Jd}_{C^0,\g} \notag\\
                                     &= 2 \eta^2 \norm{\tk-k}_{C^0}. \notag
\end{align}
\end{proof}

\nin Recall that for $C^2$ functions $f$ and $g$, we have
$$\norm{f\cdot g}_{C^2} \leq \norm{f}_{C^2}\norm{g}_{C^0} + 2 \norm{f}_{C^1}\norm{g}_{C^1} + \norm{f}_{C^0}\norm{g}_{C^2}.$$
Then, by the estimates above,
\begin{align}
\norm{\tg - g}_{C^2} &= \norm{\tg_{00}-g_{00}}_{C^2} \notag\\
                           &= \norm{\varphi_\eta(x) \Jd}_{C^2} \notag\\
                           &\leq  \norm{\varphi_\eta(x)}_{C^0} \norm{\Jd}_{C^2,W} + 2 \norm{\varphi_\eta(x)}_{C^1} \norm{\Jd}_{C^1,W}  \notag\\
                           & \hspace{1cm} + \norm{\varphi_\eta(x)}_{C^2} \norm{\Jd}_{C^0,W} \notag\\  
                           &\leq 8 \norm{\varphi(x)}_{C^2} \norm{\tk-k}_{C^0} \notag
\end{align}
which does not depend on $\eta$.  Moreover, since $\norm{\tk-k}_{C^0} = O(\e)$, this means that $\norm{\tg-g}_{C^2} \leq O(\e)$ so that $\tg$ is a $C^2$-small perturbation of $g$.

\subsection{Avoiding a finite number of transverse geodesics}
\label{avoid}

Let $\sF = \{ \xi_1, \ldots, \xi_n \}$ be a finite set of geodesic segments that are transverse to $\g$, with $\xi_i$ intersecting $\g$ at the point $\xi_i^*$. Since there are finitely many segments, the angles at which the $\xi_i$ intersect $\g$ are bounded below. Then, for thin enough $W$, we can avoid perturbing the metric in a neighborhood $V$ of $F$ (using the construction above) by not perturbing the curvature $k$ in a neighborhood $V^*$ of the points $\{ \xi_1^*, \ldots, \xi_n^*\}$ on $\g$. Moreover, $V^*$ can be made arbitrarily small by shrinking $W$ (see Figure 2). Thus, retaining the ability to perturb $DP$ a uniform distance while making $\supp(\tg-g) \subset W \setminus V$ is a consequence of the following lemma (applied to $\tk$ and a curvature $\tk_1$ equal to $k$ on $V^*$ and $\tk$ outside a small neighborhood of $V^*$).

\begin{figure}[h]\label{intersections}
\centering
\includegraphics[width=4.5in]{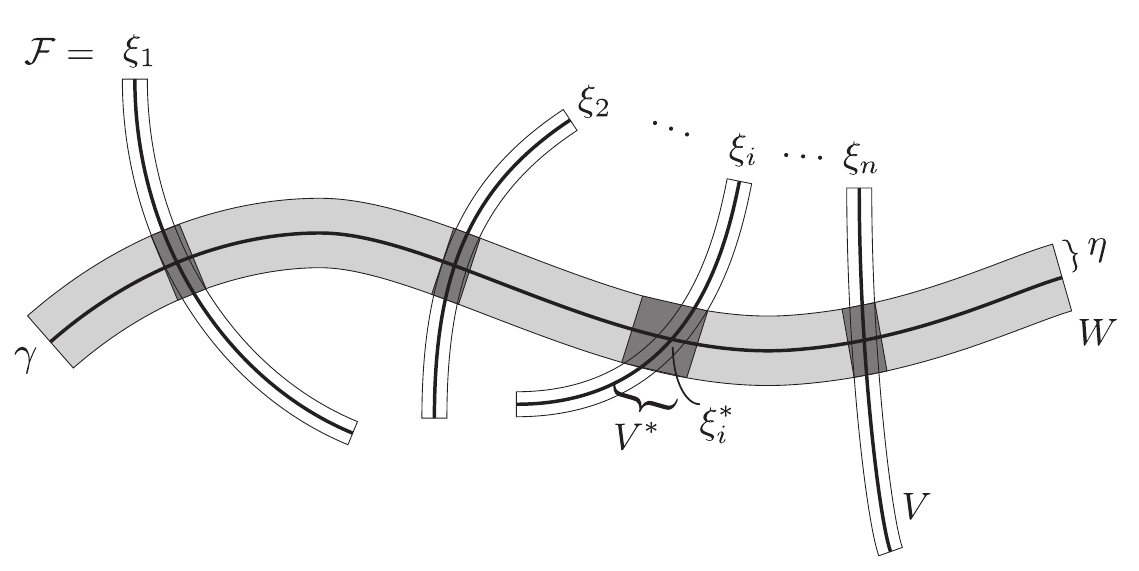}
\caption[]{Avoiding perturbing $g$ in neighborhoods of a finite number of geodesic segments.}
\end{figure}

\begin{lemma}
Let $\g$ be as above, and $C$ some constant. For any $\e>0$ and any curvature $k_1$ along $\g$ with $\norm{k_1-k}_{C^0} < C$, there exists a $\de>0$ (depending on $\e$) such that if $\supp(k_1-k) \subset \g$ is contained in a set of Lebesgue measure $\de$, then $\norm{DP_1-DP}<\e$.
\end{lemma}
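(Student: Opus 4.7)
The plan is to show that $DP$ depends continuously (in a quantitative way) on the curvature $k$, in the sense that perturbations of $k$ supported on a set of small Lebesgue measure produce small changes in the fundamental Jacobi fields $a$ and $b$ at $t=1$. Since $DP$ is built from the values $a(1), a'(1), b(1), b'(1)$, this will suffice.

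First I would set up the comparison directly via the Jacobi equation. Let $a_1, b_1$ denote the Jacobi fields for the perturbed curvature $k_1$ with the same initial conditions $a_1(0)=1, a_1'(0)=0$ and $b_1(0)=0, b_1'(0)=1$ as $a, b$. Define $u = a_1 - a$. A short computation using $a_1'' + k_1 a_1 = 0$ and $a'' + k\, a = 0$ yields
\begin{equation*}
u'' + k_1 u = -(k_1 - k)\, a, \qquad u(0)=0,\ u'(0)=0.
\end{equation*}
Write this as a first order system for $w(t) = (u(t), u'(t))$. Since $\norm{k_1 - k}_{C^0} < C$ and $k$ is bounded on the compact surface $S$, there is a uniform bound $\norm{k_1}_{C^0} \le K_0$ over all admissible perturbations. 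Similarly $a$ is uniformly bounded above on length-$1$ geodesics by some $a_{ub}$.

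Next I would apply a Gronwall estimate. One has
\begin{equation*}
|w(t)|' \le |w'(t)| \le (1 + K_0)\,|w(t)| + |(k_1-k)(t)|\cdot a_{ub},
\end{equation*}
so Gronwall gives
\begin{equation*}
|w(1)| \le e^{1+K_0}\, a_{ub} \int_0^1 |(k_1 - k)(s)|\, ds \le e^{1+K_0}\, a_{ub}\, C\, \de,
\end{equation*}
where in the last step I use that $|k_1 - k| \le C$ pointwise and that the support of $k_1 - k$ has Lebesgue measure at most $\de$. Thus $|a_1(1) - a(1)| + |a_1'(1) - a'(1)| = O(\de)$, with the implicit constant depending only on $C$, $K_0$, and $a_{ub}$. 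An identical argument with $v = b_1 - b$ (only the initial conditions and the use of a uniform upper bound for $b$ on $[0,1]$ change) gives $|b_1(1) - b(1)| + |b_1'(1) - b'(1)| = O(\de)$.

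Finally, since the entries of $DP$ and $DP_1$ are exactly $a(1), a'(1), b(1), b'(1)$ and their perturbed counterparts, the bound above yields $\norm{DP_1 - DP} \le K_1 \de$ for a constant $K_1$ independent of $\g$ and $k_1$ (within the class allowed). Choosing $\de < \e / K_1$ gives the lemma. The only mild subtlety is checking that all the constants (bound on $a$, on $k$, and Gronwall exponent) can be taken uniform over the compact family of length-$1$ geodesic segments, but this follows from the same compactness observations already used earlier in the section.
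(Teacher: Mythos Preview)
Your proof is correct and follows the same overall strategy as the paper: derive an inhomogeneous linear ODE for the difference of Jacobi fields with forcing term $(k_1-k)\cdot(\text{bounded Jacobi field})$, then use that the forcing has small $L^1$ norm (since its support has small measure and its amplitude is bounded by $C$) to conclude the solution and its derivative are small at $t=1$.

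The only real difference is in the ODE tool used for the last step. The paper keeps $k$ on the left-hand side, obtaining $(j-j_1)'' + k(j-j_1) = (k_1-k)j_1$, and then applies variation of parameters with the already-available fundamental system $a,b$ (Wronskian $\equiv 1$) to write down the explicit solution
\[
y(t) = -a(t)\int_0^t b(s)g(s)\,ds + b(t)\int_0^t a(s)g(s)\,ds,
\]
from which smallness of $y$ and $y'$ is immediate. You instead put $k_1$ on the left, pass to a first-order system, and apply Gronwall. Both are standard and equally valid; the paper's version has the mild advantage of reusing the Jacobi fields $a,b$ already introduced in the section, while your Gronwall argument is more self-contained and gives an explicit linear bound $\lVert DP_1 - DP\rVert \le K_1\delta$ without needing to recall the variation-of-parameters formula. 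One small cosmetic point: your inequality $|w'(t)| \le (1+K_0)|w(t)| + |(k_1-k)(t)|\,a_{ub}$ is correct up to a harmless norm-dependent constant (e.g.\ a factor of $\sqrt{2}$ if $|\cdot|$ is Euclidean), which does not affect the conclusion.
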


\begin{proof}
Let $j$ be a Jacobi field for $k$ along $\g$, and $j_1$ a Jacobi field for $k_1$ with the same initial conditions. From the two Jacobi equations, we get
$$(j-j_1)'' + k (j-j_1) = (k_1-k)j_1,$$
which is a perterbation of the Jacobi equation for $k$ by $g(t) = (k_1-k)(t)j_1(t)$. That is, $y(t) = j(t)-j_1(t)$ is a solution to the non-homogeneous second order equation
$$y'' + k(t) y = g(t),$$ 
with $y(0) = y'(0) = 0$. Since the Jacobi fields $a$ and $b$ are independent solutions\footnote{This means that the Wronskian $W(a,b)= ab'-a'b \not= 0$; in this case $W(a,b) \equiv 1$.} to the corresponding homogeneous second order equation $y''+ky=0$, using a variation of parameters yields the solution
$$y(t) = - a(t) \int_0^t b(s) g(s) \, ds + b(t) \int_0^t a(s) g(s) \, ds.$$
Since all of these functions are bounded, both $y(t)$ and $y'(t)$ can be made arbitrarily small by making the support of $k_1-k$ (and thus the support of $g(t)$) arbitrarily small.
\end{proof}

\medskip
\section{Franks' lemma for higher--dimensional geodesic flows}
\label{FLGF-HDS}

This section generalizes the techniques of the previous section to give a proof of Franks' lemma for geodesic flows in higher dimensions, as found in~\cite{Con10}. The relevant spaces, along with their dimension, are given below:
\begin{center}
{\def\arraystretch{1.2}
\begin{tabular}{| r c | c | c |}
\hline
&                         & \multicolumn{2}{ c|}{Dimension} \\ 
\multicolumn{2}{| c |}{Space}                       & \multicolumn{1}{c}{Surface} & $M^{n+1}$ \\ \hline
                                             &$M$                 & $2$        & $n+1$ \\ 
sphere bundle over $M$ &$SM$              & $3$        &  $2n+1$ \\
hypersurface in $SM$      &$\Si$               & $2$        &  $2n$ \\
symplectic $2n \times 2n$ matrices &$Sp(n)$          & $3$        &  $2n^2+n$ \\ 
symmetric $n \times n$ matrices &$\sS(n)$       & $1$        &  $\frac{1}{2}(n^2+n)$ \\
orthogonal $n \times n$ matrices &$O(n)$            & $0$        &  $\frac{1}{2}(n^2-n)$  \\ \hline
\end{tabular} }
\end{center}

\begin{theorem2}[\cite{Con10}]
Let $g \in \sG^4(M) \cap \sG_1$ and let $\sU$ be a neighborhood of $g$ in $\sG^2(M)$.  Then there exists $\de = \de(g,\sU) >0$ such that for any simple geodesic segment $\g$ of length $1$, each element of $B(DP(\g,g),\de) \subset Sp(n)$ is realizable as $DP(\g, \tg)$ for some $\tg \in \sU \cap \sG_\g(M)$.  Moreover, for any tubular neighborhood $W$ of $\g$ and any finite set $\sF$ of transverse geodesics, the support of the perturbation can be contained in $W \setminus V$ for some small neighborhood $V$ of the transverse geodesics $\sF$.
\end{theorem2}

Let $\g$ be a non--intersecting geodesic segment of length $1$ and fix a set of Fermi coordinates $(t; x_1, \ldots, x_n )$ along $\g$ with coordinate neighborhood $U$.  The map $DP=DP(\g,g;0,1)$ takes the following form on Jacobi fields along $\g$:
$$DP_{2n\times 2n} \left[ \begin{array}{c} J(0) \\ J'(0)  \end{array} \right]_{2n\times 1}  =  \left[ \begin{array}{c} J(1) \\ J'(1)  \end{array} \right]_{2n\times 1} .$$
The $2n$-dimensional space of Jacobi fields has a basis given by the column vectors of the $n \times n$ matrices $A(t)$ and $B(t)$ with initial conditions
$$A(0) = \Id_{n\times n}, \;\;\; A'(0) = 0_{n\times n}$$
and
$$B(0) = 0_{n\times n}, \;\;\; B'(0) = \Id_{n\times n}.$$
This allows us to write down $DP$ in coordinates:
$$DP = \left[ \begin{array}{c c} A(1) & B(1) \\ A'(1) & B'(1)  \end{array} \right]_{2n\times 2n}.$$
Below, we consider linear Poincar\'e maps from $\Si_{\dg(a)}$ to $\Si_{\dg(b)}$ for varying $a,b \in [0,1]$.  When writing down the matrix $DP(\g, g;a,b)$, we will make a time shift so that $a=0$; then $A$ and $B$ give bases for Lagrangian subspaces of Jacobi fields defined with initial conditions at $0$ as above, and 
$$DP(t) = DP(\g, g; a,a+t) = \left[ \begin{array}{c c} A(t) & B(t) \\ A'(t) & B'(t)  \end{array} \right]_{2n\times 2n}.$$
This has the notational advantage that $DP(t_1+t_2)=DP(t_2)DP(t_1)$, where $DP(t_2)$ is understood to be $DP(\g,g;t_1,t_1+t_2)$. 

We wish to find $\dim Sp(n) = 2n^2 + n$ curves in $\sG^2(M)$ such that their images under the map $DP(\g, \cdot): \sG^2(M) \rightarrow Sp(n)$ span the tangent space at $DP$, and then use the Inverse Function Theorem to produce an open ball in $Sp(n)$. In higher dimensions, the fact that the curvature matrix $R=g(R(\cdot, \dg)\dg, \cdot)$ is symmetric and must remain so under perturbation imposes a non--trivial restriction on how $A$ can be perturbed, via the equation $\tR = \tA'' \tA^{-1}$. 

In fact, it is not obvious how to perturb $A$ while keeping $\tR = \tA'' \tA^{-1}$ symmetric, so we consider instead $U_A = A' A^{-1}$. Differentiating and employing the Jacobi equation shows that $U_A(t)$ satisfies the Riccati equation $$U_A'+U_A^2+R = 0.$$ Working with the Riccati equation has the advantage that making a symmetric perturbation to $U_A$ guarantees (in fact, is equivalent to) that the perturbed curvature will be symmetric.

As described below, the three families of perturbations from Section \ref{FLGF-SS} generalize to give $3 \cdot \dim \sS(n) = \frac{3}{2}(n^2+n)$ one-parameter families of perturbations to $DP$. This leaves $\frac{1}{2}(n^2-n) = \dim O(n)$ dimensions to fill in $Sp(n)$, which we do by making two perturbations at different points along $\g$ that cancel each other out modulo the effects of the dynamics along $\g$ in between these points.  This is possible as long as there are points along $\g$ for which the matrix $R$ has distinct eigenvalues. Geometrically, this can be seen as supplying some rotation in the dynamics along $\g$.

\subsection{The set $\sG_1$}
Let $\sG_1$ be the set of metrics for which every geodesic segment of length $\frac{1}{2}$ has some point at which the curvature matrix $R$ has distinct eigenvalues.  More precisely, let $h: \sS(n) \rightarrow \R_{\geq 0}$ be given by 
$$h(R) = \prod_{1 \leq i < j \leq n} (\la_j - \la_i),$$
where $\la_1 \leq \la_2 \leq \ldots \leq \la_n$ are the eigenvalues of $R$.  It is evident that $h(R) \geq 0$, and $h(R)=0$ if and only if $R$ has repeated eigenvalues.  Let $H: \sG^2(M) \rightarrow \R_{\geq 0}$ be the smallest value of $h$ over all length--$\frac{1}{2}$ geodesics on $SM$:
$$H(g) = \min_{\theta \in SM} \max_{t \in [0,\frac{1}{2}]} h(R(\phi_g^t(\theta))).$$  
Denote the set of metrics for which this number is strictly positive by $\sG_1 = \{ g \in \sG^2(M) | H(g) > 0 \}.$ Theorem 6.1 of \cite{Con10} states that $H:  \sG^2(M) \rightarrow \R_{\geq 0}$ is continuous and that $\sG_1$ is $C^2$ open and $C^\infty$ dense in $\sG^2(M)$. This means that for a $C^2$ open and dense set of metrics, any geodesic segment of length $\frac{1}{2}$ has a point along it where the eigenvalues of $R$ have at least a certain amount of separation, depending only on the metric $g$. We need this property when assembling perturbations in Section \ref{schema}.

\subsection{Perturbing $DP$ by perturbing the curvature matrix $R$}

First, we need to make explicit how perturbations to $U_A = \psi$ and $DP$ are related. $A(t)$, $A'(t)$ and $U_A(t)$ satisfy the equations
$$A'(t) = U_A(t) A(t)$$
and 
$$A(t) = A(0) + \int_0^t U_A(s) A(s) \, ds.$$ 
For $\tU_A = U_A + \psi$, let $\tA(t)$ be the resulting perturbation of $A(t)$ and write $\De A(t) = \tA(t) - A(t)$; similarly for $\tB(t), \tDP(t), \De B(t),$ and $\De DP(t)$.  Then 
\begin{align}
\De A'(t) = \tA'(t) - A'(t) &= \tU_A \tA - U_A A \notag \\
                            &= (U_A + \psi)(A + \De A) - U_A A \notag\\
                            &= \psi A + (U_A + \psi) \De A, \label{A'}
\end{align}
and
\begin{equation}
\De A(t) = \int_0^t \De A'(s) \, ds = \int_0^t \psi A (s) \, ds + \int_0^t (U_A + \psi) \De A (s) \, ds. \label{A}
\end{equation}

From the data $A(t)$, we can also write down $B(t)$.  As in Section \ref{FLGF-SS}, reduction of order on the Jacobi equation $A'' = -R A$ gives:
$$B(t) = A(t) \int_0^t (A^T A)^{-1}(s) \, ds.$$
(See, for instance, \cite{Es77}.) Then
\begin{align}
\De B(t) &= \tB(t) - B(t) \notag \\
               &= \De A(t) \int_0^t (\tA^T \tA)^{-1} (s) \, ds + A(t) \int_0^t \left( (\tA^T \tA)^{-1}(s) - (A^T A)^{-1}(s) \right) \, ds. \label{B}
\end{align}
Notice that $A, A',$ and $B$ determine $B'$, since differentiating the above formula for $B$ yields
\begin{align*}
B'(t) &= A'(t) A^{-1}(t) B(t) + (A^T)^{-1}(t) \\
        &= U_A(t) B(t) + (A^T)^{-1}(t). 
\end{align*}
Then
\begin{equation}
\De B'(t) = \tB'(t) - B'(t) 
               = U_A \De B + \psi(B \De B) + (\tA^T)^{-1} - (A^T)^{-1}.  \label{B'}
\end{equation}
This describes the relation between perturbing $U_A$ and $DP$.

$R$ and $\sU_A$ are related via the Riccati equation $R+U_A^2+U_A' = 0$.  Declaring $\tU_A$ to satisfy this equation yields a new curvature matrix $\tR$, and
\begin{equation}\label{R}
\De R(t) =\tR(t) - R(t) = - \psi' - U_A \psi - \psi U_A - \psi^2.
\end{equation}

Let $\| \cdot \| : \Mat(n) \rightarrow \R$ be the matrix norm defined by $\|A \| = \max_j \sum_i |a_{ij}|$, which is the maximum of the column vector sums. This norm is submultiplicative, which is used extensively in the estimates below. For a matrix $M$ depending on $\e$, write $M = O(\e)$ if $\Norm{M} \leq C \e$ for a constant $C$, and $M = \Theta(\e)$ if $c\e \leq \Norm{M} \leq C \e$ for some constants $c$ and $C$.  We will use $\Theta(\e)$ rather than $O(\e)$ to indicate that some entry of the matrix $M$ has size bounded from below by $c\e$; in particular, $\Norm{M}$ is not too small.  If we are only concerned with a general size estimate of $\De DP(t)$ resulting from a change to the curvature of size $\De R=O(\e)$, then the Jacobi equation along with the initial conditions for $A(t)$ and $B(t)$ yield
\begin{equation}\label{gen}
\begin{array}{l l}
\De A''(t) = O(\e) & \De B''(t) = O(\e t) \\
\De A'(t) = O(\e t) & \De B'(t) = O(\e t^2) \\
\De A(t) = O(\e t^2) & \De B(t) = O(\e t^3). 
\end{array}
\end{equation}

\subsubsection{Perturbation functions and their effects}
Consider $C^{\infty}$ functions satisfying the following properties:
\begin{center}
{\def\arraystretch{1.4}
\begin{tabular}{|c|c|c|}
\hline
$\psi_1: ]-\infty,\de^3] \rightarrow \R$                           & $\psi_2: \R \rightarrow \R$                                       & $\psi_3: \R \rightarrow \R$                          \\ 
\hline
$\supp(\psi_1) \subseteq \left] 0,\de^3 \right]$           & $\supp(\psi_2) \subseteq \left] 0, \de^{3/2} \right[$ & $\supp(\psi_3) \subseteq \left] 0, \de \right[$ \\
$\norm{\psi_1}_{C^0}\leq \e \de^3$                             & $\norm{\psi_2}_{C^0} \leq C \e \de^{3/2}$              & $\norm{\psi_3}_{C^0} \leq C \e \de$               \\ 
$\norm{\psi_1}_{C^1}\leq C\e$                                     & $\norm{\psi_2}_{C^1}\leq C\e$                                 & $\norm{\psi_3}_{C^1}\leq C\e$        \\
$\psi_1(\de^3)=\e \de^3$                                               & $\psi_2(\de^{3/2})=0$                                                 & $\psi_3(\de)=0$      \\
                                                                                            & $\int_0^{\de^{3/2}} \psi_2(t)\, dt=\e \de^3$              & $\int_0^\de \psi_3(t)\, dt=0$  \\
                                                                                            &                                                                                         & $\int_0^\de \int_0^t \psi_3(s) \, ds \, dt = \e \de^3$ \\ 
\includegraphics[width=1.35in]{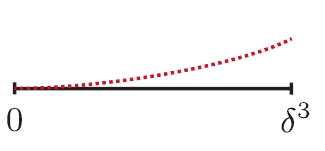}                      & \includegraphics[width=1.35in]{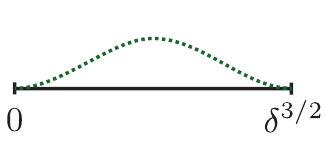}                  & \includegraphics[width=1.35in]{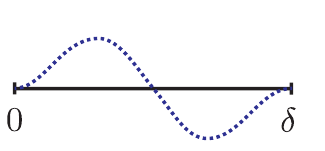}   \\  \hline
\end{tabular} }
\end{center}
and let $\psi^{ij}_k$ be the symmetric $n \times n$ matrix $M$ with $m_{ij} = m_{ji} = \psi_k$ and $0$ otherwise.  Write $\Psi_k(t) = \int_0^t \psi_k(s) \, ds$.

Heuristically, the following computational lemmas show that, over the above intervals of support, adding $\psi_1$ to $U_A$ perturbs $A'$, adding $\psi_2$ to $U_A$ perturbs $A$, and adding $\psi_3$ to $U_A$ perturbs $B$. The estimates follow from relatively straight--forward applications of Equations \ref{A'}--\ref{R}. Note that the differing sizes of support of the perturbations are in order that their effects are of the same size.

\begin{lemma}
Let $\tU_A = U_A + \psi^{ij}_1$. Then
$$\De DP(\de^3) = \left[ \begin{array}{c c} \De A(\de^3) & \De B(\de^3) \\ \De A'(\de^3) & \De B'(\de^3) \end{array} \right] = \left[ \begin{array}{c c} O(\e\de^6) & O(\e\de^9) \\ \Theta(\e\de^3) & O(\e\de^6) \end{array} \right] , $$
with $$\De A'(\de^3) = \psi^{ij}_1(\de^3) + O(\e \de^9),$$
and $\De R =O(\e).$
\end{lemma}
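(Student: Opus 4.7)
The plan is to plug $\psi = \psi_1^{ij}$ into the integral/differential relations (\ref{A'})--(\ref{R}) and exploit the initial conditions $A(0)=\mathrm{Id}$, $A'(0)=0$ (and hence $U_A(0)=0$) to pick up extra factors of $\delta$. Throughout, I restrict attention to $t\in[0,\delta^3]$, where the relevant background quantities satisfy $A(t)=\mathrm{Id}+O(t^2)$, $U_A(t)=O(t)$, and $B(t)=t\,\mathrm{Id}+O(t^3)$, simply by Taylor expansion of the smooth functions $A, B$ around $0$. Combined with the hypotheses $\|\psi_1\|_{C^0}\le \epsilon\delta^3$, $\|\psi_1\|_{C^1}\le C\epsilon$, and $\psi_1(\delta^3)=\epsilon\delta^3$, these will be the only ingredients needed.

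First I would bound $\Delta A$ on $[0,\delta^3]$. In (\ref{A}), the integrand $\psi A$ is uniformly $O(\epsilon\delta^3)$, so the first integral contributes $O(\epsilon\delta^6)$; the second integrand carries the prefactor $\|U_A+\psi\|=O(\delta^3)$, so a Gr\"onwall bootstrap yields $\sup_{[0,\delta^3]}\|\Delta A\|=O(\epsilon\delta^6)$. Plugging this into (\ref{A'}) at $t=\delta^3$ gives
\[
\Delta A'(\delta^3)=\psi_1^{ij}(\delta^3)\,A(\delta^3)+(U_A+\psi)(\delta^3)\,\Delta A(\delta^3).
\]
Using $A(\delta^3)=\mathrm{Id}+O(\delta^6)$ turns the first summand into $\psi_1^{ij}(\delta^3)+O(\epsilon\delta^9)$, while $(U_A+\psi)(\delta^3)=O(\delta^3)$ makes the second summand $O(\delta^3)\cdot O(\epsilon\delta^6)=O(\epsilon\delta^9)$. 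Since $\|\psi_1^{ij}(\delta^3)\|=\epsilon\delta^3$, this simultaneously gives the identity $\Delta A'(\delta^3)=\psi_1^{ij}(\delta^3)+O(\epsilon\delta^9)$ and the size $\Theta(\epsilon\delta^3)$.

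For $\Delta B(\delta^3)$ I would use (\ref{B}). The first term is immediate: $\Delta A(\delta^3)\cdot\int_0^{\delta^3}(\tilde A^T\tilde A)^{-1}=O(\epsilon\delta^6)\cdot O(\delta^3)=O(\epsilon\delta^9)$. For the second, the matrix identity $X^{-1}-Y^{-1}=X^{-1}(Y-X)Y^{-1}$ together with $(\tilde A^T\tilde A)-(A^T A)=O(\|\Delta A\|)$ and $A,\tilde A$ close to $\mathrm{Id}$ give $(\tilde A^T\tilde A)^{-1}-(A^T A)^{-1}=O(\epsilon\delta^6)$ pointwise on $[0,\delta^3]$, so the integral over $[0,\delta^3]$ is $O(\epsilon\delta^9)$ and the second term is $O(1)\cdot O(\epsilon\delta^9)=O(\epsilon\delta^9)$. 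The bound on $\Delta B'(\delta^3)$ then follows termwise from (\ref{B'}): $U_A\Delta B=O(\delta^3\cdot\epsilon\delta^9)$, $\psi B=O(\epsilon\delta^3\cdot\delta^3)=O(\epsilon\delta^6)$, $\psi\Delta B=O(\epsilon^2\delta^{12})$, and $(\tilde A^T)^{-1}-(A^T)^{-1}=-(\tilde A^T)^{-1}(\Delta A)^T(A^T)^{-1}=O(\epsilon\delta^6)$, yielding $O(\epsilon\delta^6)$ overall. Finally, (\ref{R}) gives $\Delta R=-\psi_1'-U_A\psi_1-\psi_1 U_A-\psi_1^2$; the three latter terms are $O(\epsilon\delta^6)$ or $O(\epsilon^2\delta^6)$, so the leading contribution is $\psi_1'=O(\epsilon)$.

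The main obstacle I expect is not conceptual but bookkeeping: one must be careful that the sharper conclusion $\Delta A'(\delta^3)=\psi_1^{ij}(\delta^3)+O(\epsilon\delta^9)$ really comes out, which forces the use of both $A(\delta^3)-\mathrm{Id}=O(\delta^6)$ (from $A'(0)=0$) and $U_A(\delta^3)=O(\delta^3)$ (from $U_A(0)=0$). Without either of these, the error terms would only be $O(\epsilon\delta^6)$, not small enough to separate from the $\Theta(\epsilon\delta^3)$ main term at the orders needed to distinguish the three kinds of perturbations in the subsequent lemmas.
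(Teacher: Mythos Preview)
Your proposal is correct and follows essentially the same approach as the paper: plug $\psi=\psi_1^{ij}$ into relations (\ref{A'})--(\ref{R}) and exploit $A(0)=\Id$, $A'(0)=0$, $U_A(0)=0$ to extract the extra powers of $\de$. The only cosmetic difference is that the paper first establishes $\De R=O(\e)$ and then invokes the general Jacobi estimates (\ref{gen}) to read off $\De A(\de^3)=O(\e\de^6)$, $\De B(\de^3)=O(\e\de^9)$, $\De B'(\de^3)=O(\e\de^6)$ in one stroke, whereas you derive each of these directly from (\ref{A}), (\ref{B}), (\ref{B'}) via Gr\"onwall and the matrix identity $X^{-1}-Y^{-1}=X^{-1}(Y-X)Y^{-1}$; both routes are valid and yield the same bounds.
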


\begin{proof}
From the Riccati equation and the initial conditions for $U_A$, we get $U_A(t) = O(t)$ on $[0,1]$.  Then by Equation \ref{R},
\begin{align*}
\De R(t) &= \psi'(t) + U(t) \psi (t) + \psi (t) U (t) + \psi^2 (t) \\
               &= O(\e) + O(\de^3) \cdot O(\e\de^3) + O(\e\de^3) \cdot O(\de^3) + O(\e^2\de^6) = O(\e).
\end{align*}
By Equation \ref{A'},
\begin{align*}
\De A'(\de^3)  &= \psi(\de^3) A(\de^3) + \left( U_A(\de^3) + \psi(\de^3) \right) \De A(\de^3)) \\
                         &= \psi(\de^3) A(\de^3) + \left( O(\de^3)+O(\e\de^3) \right) O(\e\de^6) \\
                         &= \psi(\de^3) A(\de^3) + O(\e \de^9).
\end{align*}
Write $A(\de^3) = A(0) + (A(\de^3) - A(0))$.  Since $\Norm{A'}_{[0,\de^3]} < C_A \de^3$, we have $(A(\de^3) - A(0)) = O(\de^6)$.  Then
\begin{align*}
\De A'(\de^3)  &= \psi(\de^3) A(\de^3) + O(\e \de^9) \\
                         &= \psi(\de^3) A(0) + \psi(\de^3) O(\de^6) + O(\e \de^9) \\
                         &= \psi(\de^3) + O(\e \de^9).
\end{align*}
Equation \ref{gen}, along with $\De R(t) = O(\e)$, give
\begin{align*}
\De A(\de^3) &= O(\e\de^6) \\
\De B(\de^3) &= O(\e\de^9) \\
\De B'(\de^3) &= O(\e\de^6). 
\end{align*}
\end{proof}

\begin{lemma}
Let $\tU_A = U_A + \psi^{ij}_2$. Then
$$\De DP(\de^{3/2}) = \left[ \begin{array}{c c} \De A(\de^{3/2}) & \De B(\de^{3/2}) \\ \De A'(\de^{3/2}) & \De B'(\de^{3/2}) \end{array} \right] = \left[ \begin{array}{c c} \Theta(\e\de^3) & O(\e\de^{9/2}) \\ O(\e\de^{9/2}) & O(\e\de^3) \end{array} \right] , $$
with $$\De A(\de^{3/2}) = \Psi^{ij}_2(\de^{3/2}) + O(\e \de^6),$$
and $\De R = O(\e).$
\end{lemma}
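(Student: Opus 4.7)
The argument parallels the preceding $\psi_1$ lemma, so I would follow the same four-step template: bound $\De R$, identify the leading term of $\De A(\de^{3/2})$ from the integral formula (\ref{A}), exploit the boundary condition $\psi_2(\de^{3/2}) = 0$ in (\ref{A'}) to sharpen the estimate on $\De A'(\de^{3/2})$, and finally read the $B$-column entries off the generic bounds (\ref{gen}). The curvature estimate is immediate from (\ref{R}): on $[0, \de^{3/2}]$, the Riccati equation and $U_A(0) = 0$ give $U_A = O(t) = O(\de^{3/2})$, while the hypotheses on $\psi_2$ give $\Norm{\psi^{ij}_2}_{C^0} = O(\e \de^{3/2})$ and $\Norm{(\psi^{ij}_2)'}_{C^0} = O(\e)$. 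The cross terms $U_A \psi^{ij}_2$ and $\psi^{ij}_2 U_A$ are $O(\e \de^3)$ and the quadratic term $(\psi^{ij}_2)^2$ is $O(\e^2 \de^3)$, so $\De R = -(\psi^{ij}_2)' + O(\e \de^3) = O(\e)$.

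For $\De A(\de^{3/2})$, I would apply (\ref{A}) and expand $A(s) = \Id + O(s^2)$ on $[0, \de^{3/2}]$, which follows from $A(0) = \Id$, $A'(0) = 0$, and the bound on $A''$. The main piece of the first integral is $\int_0^{\de^{3/2}} \psi^{ij}_2(s) \, ds = \Psi^{ij}_2(\de^{3/2})$, the symmetric matrix whose nonzero entries are $\e \de^3$ by the normalization of $\psi_2$. The correction from the $O(s^2)$ remainder of $A - \Id$ is bounded by $\Norm{\psi_2}_{C^0} \cdot O(\de^3) \cdot \de^{3/2} = O(\e \de^6)$. For the self-coupling integral $\int_0^{\de^{3/2}} (U_A + \psi^{ij}_2) \De A \, ds$, I would bootstrap with the generic bound $\De A(s) = O(\e s^2)$ from (\ref{gen}), giving integrand $O(\e s^3) + O(\e^2 \de^{3/2} s^2)$ and hence a contribution of $O(\e \de^6)$. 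Combining gives $\De A(\de^{3/2}) = \Psi^{ij}_2(\de^{3/2}) + O(\e \de^6)$, which is $\Theta(\e \de^3)$.

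The sharper estimate on $\De A'(\de^{3/2})$ is then a direct consequence of (\ref{A'}) evaluated at $t = \de^{3/2}$: the term $\psi^{ij}_2(\de^{3/2}) A(\de^{3/2})$ vanishes by construction, leaving $\De A'(\de^{3/2}) = U_A(\de^{3/2}) \De A(\de^{3/2}) = O(\de^{3/2}) \cdot O(\e \de^3) = O(\e \de^{9/2})$. The remaining two entries $\De B(\de^{3/2}) = O(\e \de^{9/2})$ and $\De B'(\de^{3/2}) = O(\e \de^3)$ follow immediately from (\ref{gen}) at $t = \de^{3/2}$ once $\De R = O(\e)$ is known.

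The main obstacle is not any individual calculation but rather book-keeping: the sharp bounds $\De A = \Psi^{ij}_2 + O(\e \de^6)$ and $\De A' = O(\e \de^{9/2})$ both beat the generic estimates in (\ref{gen}), and each improvement uses a specific structural feature of $\psi_2$ --- the calibrated $C^0$ size together with the integral normalization $\int \psi_2 = \e \de^3$ for the first, and the boundary condition $\psi_2(\de^{3/2}) = 0$ for the second. Verifying that these improvements really occur is what makes this lemma useful: it pins down that the $\psi_2$ family acts essentially on the $A$-block of $DP$ alone, with leading-order size exactly $\e \de^3$, which is precisely what is needed for the three $\psi$-families to contribute linearly independent directions to $DP$ later in the proof.
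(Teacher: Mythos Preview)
Your proof is correct and follows essentially the same approach as the paper: bound $\De R$ via the Riccati relation, extract the leading term $\Psi^{ij}_2(\de^{3/2})$ of $\De A$ from the integral formula using $A(s)=\Id+O(s^2)$, exploit $\psi_2(\de^{3/2})=0$ to sharpen $\De A'$ to $O(\e\de^{9/2})$, and read off the $B$-column from the generic bounds. The only cosmetic difference is that you spell out the bootstrap and the individual cross-term sizes for $\De R$ more explicitly than the paper does.
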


\begin{proof}
Since $\Norm{\psi_2^{ij}}_{C^1} \leq C \e$, $\De R = O(\e)$.
From Equation \ref{A}, we have
\begin{align*}
\De A(\de^{3/2}) &= \int_0^{\de^{3/2}} \psi(s) A(s) \, ds + \int_0^{\de^{3/2}} (U_A + \psi) \De A (s) \, ds \\
                              &= \int_0^{\de^{3/2}} \psi(s) A(s) \, ds + \de^{3/2} \left( O(\de^{3/2}) + O(\e\de^{3/2}) \right) O(\e\de^3) \\
                              &= \int_0^{\de^{3/2}} \psi(s) A(0) \, ds + \int_0^{\de^{3/2}} \psi(s) (A(s)-A(0)) \, ds + O(\e\de^6) \\
                              &= \int_0^{\de^{3/2}} \psi(s) \, ds + O(\e\de^6),
\end{align*}
since $A(s)-A(0) = O(\de^3)$ on $[0,\de^{3/2}]$.
As $\psi_2^{ij}(\de^{3/2}) = 0$, Equation \ref{A'} gives
\begin{align*}
\De A'(\de^{3/2}) &= \psi(\de^{3/2}) A(\de^{3/2}) + \left( U_A(\de^{3/2}) + \psi(\de^{3/2}) \right) \De A(\de^{3/2}) \\
                              &= U_A(\de^{3/2}) \De A(\de^{3/2}) \\
                              &= O(\de^{3/2}) O(\e \de^3) = O(\e \de^{9/2}).
\end{align*}
Equation \ref{gen}, along with $\De R(t) = O(\e)$, give
\begin{align*}
\De B(\de^{3/2}) &= O(\e\de^{9/2}) \\
\De B'(\de^{3/2}) &= O(\e\de^3). 
\end{align*}
\end{proof}

\begin{lemma} \label{III}
Let $\tU_A = U_A + \psi^{ij}_3$. Then
$$\De DP(\de) = \left[ \begin{array}{c c} \De A(\de) & \De B(\de) \\ \De A'(\de) & \De B'(\de) \end{array} \right] = \left[ \begin{array}{c c} O(\e\de^4) & \Theta(\e\de^3) \\ O(\e\de^5) & O(\e\de^4) \end{array} \right] , $$
with $$\De B(\de) = -2 \int_0^\de \Psi^{ij}_3(t) \, dt + O(\e\de^5),$$
and $\De R = O(\e).$
\end{lemma}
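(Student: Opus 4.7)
The proof will follow the same template as the preceding two lemmas, applying Equations \ref{A'}--\ref{R} with the specific properties of $\psi_3$. The key distinction from the previous cases is that $\psi_3(\de) = 0$ \emph{and} $\Psi_3(\de) = \int_0^\de \psi_3(s)\,ds = 0$, so both the pointwise and single-integral effects of $\psi_3$ vanish at the endpoint. The only surviving leading contribution is the double integral $\int_0^\de \Psi_3(t)\,dt = \e\de^3$, which is why $\De B$ rather than $\De A$ or $\De A'$ carries the $\Theta$-sized entry.

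First I would dispatch $\De R = O(\e)$ directly from Equation \ref{R}, using $\Norm{\psi_3}_{C^1} \leq C\e$ together with $U_A(t) = O(t) = O(\de)$ on $[0,\de]$ (which follows from $U_A(0)=0$ and the Riccati equation). Next I would estimate $\De A(\de)$ from Equation \ref{A} by integration by parts on $\int_0^\de \psi(s) A(s)\,ds$: since $\Psi_3(\de)=0$ the boundary term drops and the remaining $-\int_0^\de \Psi_3(s) A'(s)\,ds$ is $O(\e\de^4)$ because $\Psi_3 = O(\e\de^2)$ and $A' = U_A A = O(\de)$; the feedback term $\int_0^\de (U_A+\psi)\De A\,ds$ is controlled via the crude bound $\De A = O(\e\de^2)$ from Equation \ref{gen}. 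Then $\De A'(\de) = O(\e\de^5)$ follows immediately from Equation \ref{A'} since $\psi_3(\de)=0$ kills the dominant term, leaving only $U_A(\de)\De A(\de) = O(\de)\cdot O(\e\de^4)$.

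The main content is identifying the leading term of $\De B(\de)$. For this I would first establish the finer bound $\De A(s) = \Psi^{ij}_3(s) + O(\e\de^4)$ on all of $[0,\de]$ by Taylor expanding $A(r) = \Id + O(r^2)$ inside $\int_0^s \psi(r) A(r)\,dr$. Since $\psi^{ij}_3$ is symmetric by construction, so is $\Psi^{ij}_3$, hence $\De A^T(s) = \Psi^{ij}_3(s) + O(\e\de^4)$ as well. Expanding
\begin{equation*}
(\tA^T\tA)^{-1} - (A^TA)^{-1} = (\tA^T\tA)^{-1}\bigl[A^TA - \tA^T\tA\bigr](A^TA)^{-1}
\end{equation*}
and using $A = \Id + O(\de^2)$ together with the symmetry of $\De A$, the inner bracket collapses to $-A^T\De A - \De A^T A - \De A^T\De A = -2\Psi^{ij}_3(s) + O(\e\de^4)$, while the outer inverses contribute only $O(\de^2)$-sized corrections. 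Integrating over $[0,\de]$ and substituting into Equation \ref{B} (whose first term $\De A(\de) \cdot \int_0^\de (\tA^T\tA)^{-1}$ is trivially $O(\e\de^5)$) yields the claimed formula $\De B(\de) = -2\int_0^\de \Psi^{ij}_3(t)\,dt + O(\e\de^5)$. The design condition $\int_0^\de \Psi_3 = \e\de^3$ then upgrades this to the $\Theta(\e\de^3)$ bound. Finally, $\De B'(\de) = O(\e\de^4)$ is read off from Equation \ref{B'} at $t=\de$ since $\psi_3(\de)=0$, while $U_A(\de)\De B(\de) = O(\de)\cdot O(\e\de^3)$ and $(\tA^T)^{-1}(\de) - (A^T)^{-1}(\de) = -(\tA^T)^{-1}\De A^T (A^T)^{-1}\bigr|_{\de}$ are both $O(\e\de^4)$.

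The main obstacle is the bookkeeping in the $\De B$ calculation: unlike the two previous lemmas, where the $\Theta$-sized contribution appears directly in an application of Equation \ref{A} or \ref{A'}, here one has to descend to the level of $(A^TA)^{-1}$ inside the reduction-of-order formula for $B$ to see the leading effect, and one must propagate the symmetry of $\psi^{ij}_3$ through $\De A$ in order to produce the explicit factor $-2$ and to rule out cancellation in $\int_0^\de \Psi^{ij}_3\,ds$.
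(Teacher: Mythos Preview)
Your proposal is correct and follows essentially the same route as the paper: both use Equations \ref{A'}--\ref{B'} together with $\psi_3(\de)=\Psi_3(\de)=0$ to show the only surviving leading term sits in $\De B$, and both identify that term by linearizing $(\tA^T\tA)^{-1}-(A^TA)^{-1}\approx -(\De A+\De A^T)\approx -2\Psi_3^{ij}$. The only cosmetic difference is that the paper packages this inverse-difference estimate as a separate technical lemma (Lemma~\ref{Dg}, proved via a derivative computation along the path from $A$ to $\tA$), whereas you obtain it inline from the resolvent identity $(\tA^T\tA)^{-1}[A^TA-\tA^T\tA](A^TA)^{-1}$; and the paper gets $\De A(\de)=O(\e\de^4)$ by writing $A(s)=A(0)+(A(s)-A(0))$ rather than by your integration by parts.
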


\begin{proof}
Since $\Psi_3^{ij}(\de) = 0$, Equation \ref{A} gives
\begin{align*}
\De A(\de) &= \int_0^{\de} \psi(s) A(s) \, ds + \int_0^{\de} (U_A + \psi) \De A (s) \, ds \\
                              &= \int_0^{\de} \psi(s) \, ds + \int_0^{\de} \psi(s) (A(s)-A(0)) \, ds + O(\e\de^4) \\
                              &= O(\e\de^4).
\end{align*}
Since $\psi_3^{ij}(\de) = 0$, Equation \ref{A'} and the above computation give
\begin{align*}
\De A'(\de) &=\psi(\de) A(\de) + \left( U_A(\de) + \psi(\de) \right) \De A(\de) \\
                    &= U_A(\de) \De A(\de) \\
                    &= O(\de) O(\e\de^4) = O(\e \de^5).
\end{align*}
From Equation \ref{B} and Lemma \ref{Dg} (below) we get
\begin{align*}
\De B(\de) &= \De A(\de) \int_0^\de (\tA^T \tA)^{-1} (s) \, ds + A(\de) \int_0^\de \left( (\tA^T \tA)^{-1}(s) - (A^T A)^{-1}(s) \right) \, ds \\
                   &= O(\e\de^5) + (\Id + O(\e\de^4)) \int_0^\de (-\De A(s) - \De A^T(s) + O(\e\de^4) ) \, ds \\
                   &= \int_0^\de -(\De A(s)+ \De A^T(s)) \, ds + O(\e\de^5) \\
                   &= -2 \int_0^\de \int_0^t \psi(s) \, ds \, dt + O(\e\de^5),
\end{align*}
while from Equation \ref{B'} and Lemma \ref{Dg} we get
\begin{align*}
\De B'(\de) &= U_A(\de) \De B(\de) + \psi^{ij}_3(\de)(B(\de) + \De B(\de)) + (\tA^T)^{-1}(\de) - (A^T)^{-1}(\de) \\
                    &= O(\de) O(\e\de^3) + 0 - \De A^T(\de) + O(\e\de^4) \\
                    &= O(\e\de^4).
\end{align*}
\end{proof}

The following technical lemma is necessary when giving estimates based on Equation \ref{B}, and is used above in computations of the proof of Lemma~\ref{III}. It says, roughly, that $\De ((A^T A)^{-1}) \approx -\De A - \De A^T$ and $\De((A^T)^{-1}) \approx -\De A^T$.

\begin{lemma}\label{Dg}
For $0 \leq s \leq \de$, 
$$(\tA^T \tA)^{-1}(s) - (A^T A)^{-1}(s) = - ( \De A(s) + \De A^T(s) ) + O(\e\de^4)$$
and
$$(\tA^T)^{-1}(s) - (A^T)^{-1}(s) = -\De A^T(s) + O(\e\de^4).$$
\end{lemma}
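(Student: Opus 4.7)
\medskip

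The plan is to exploit that on the short interval $[0,\de]$, both $A(s)$ and $\tA(s)$ are close to the identity matrix, so that the relevant inverses can be expanded to first order in the perturbation $\De A$.

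First I would record the size estimates that make the lemma go through. From $A(0)=\Id$, $A'(0)=0$, and the Jacobi equation $A''=-RA$ with $R$ bounded along $\g$, a Taylor expansion gives $A(s)=\Id+O(\de^2)$ and $A'(s)=O(\de)$ for $s\in[0,\de]$. Since $\De R = O(\e)$ (established in the proof of Lemma \ref{III}), the same argument applied to $\tA$ yields $\tA(s)=\Id+O(\de^2)$. For $\De A$ itself, the integral form \eqref{A} gives
\begin{equation*}
\De A(s) = \int_0^s \psi_3^{ij}(u) A(u)\,du + \int_0^s (U_A+\psi_3^{ij})\De A(u)\,du,
\end{equation*}
and since $\|\psi_3^{ij}\|_{C^0}\leq C\e\de$ and $U_A+\psi_3^{ij} = O(\de)$ on $[0,\de]$, a standard Gr\"onwall iteration yields $\De A(s)=O(\e\de^2)$ uniformly for $s\in[0,\de]$. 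Consequently both $A^T A$ and $\tA^T\tA$ are invertible on this interval with $(A^T A)^{-1}=\Id+O(\de^2)$ and $(\tA^T\tA)^{-1}=\Id+O(\de^2)$, and similarly $(A^T)^{-1},(\tA^T)^{-1}=\Id+O(\de^2)$.

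For the first identity, I would expand
\begin{equation*}
\tA^T\tA - A^T A = A^T \De A + \De A^T A + \De A^T \De A,
\end{equation*}
substitute $A=\Id+O(\de^2)$ and $\De A = O(\e\de^2)$ to collapse this to $\De A+\De A^T + O(\e\de^4)$, and then apply the elementary identity $X^{-1}-Y^{-1}=-X^{-1}(X-Y)Y^{-1}$:
\begin{align*}
(\tA^T\tA)^{-1}-(A^T A)^{-1} &= -(\tA^T\tA)^{-1}\bigl(\De A+\De A^T + O(\e\de^4)\bigr)(A^T A)^{-1} \\
&= -\bigl(\Id+O(\de^2)\bigr)\bigl(\De A+\De A^T\bigr)\bigl(\Id+O(\de^2)\bigr) + O(\e\de^4) \\
&= -(\De A+\De A^T) + O(\e\de^4),
\end{align*}
where the cross terms $O(\de^2)\cdot O(\e\de^2)$ are absorbed into $O(\e\de^4)$. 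The second identity is obtained identically, writing $(\tA^T)^{-1}-(A^T)^{-1}=-(\tA^T)^{-1}\De A^T (A^T)^{-1}=-(\Id+O(\de^2))\De A^T(\Id+O(\de^2))=-\De A^T+O(\e\de^4)$.

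The main obstacle is not algebraic but bookkeeping: one must verify that the intermediate bound $\De A(s)=O(\e\de^2)$ holds \emph{uniformly} on $[0,\de]$, not merely at $s=\de$ where cancellation from $\Psi_3^{ij}(\de)=0$ gives the sharper $O(\e\de^4)$. Without the uniform bound, the cross terms in the expansion of $\tA^T\tA-A^T A$ would dominate the claimed $O(\e\de^4)$ error. Everything else is routine manipulation of Neumann-type expansions.
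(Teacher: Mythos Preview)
Your proof is correct and in fact slightly more direct than the paper's. The paper obtains the same estimates by writing the difference as an integral of the Fr\'echet derivative of the map $g(A)=(A^TA)^{-1}$ (respectively $f(A)=(A^T)^{-1}$) along the straight-line path $X=(1-r)A(s)+r\tA(s)$, computing $D_Xg(Y)$ via the chain rule, and then using $X=\Id+O(\de^2)$ to simplify. You bypass this calculus entirely with the resolvent identity $X^{-1}-Y^{-1}=-X^{-1}(X-Y)Y^{-1}$ and a direct expansion of $\tA^T\tA-A^TA$; the required inputs ($A(s)=\Id+O(\de^2)$ and $\De A(s)=O(\e\de^2)$ uniformly on $[0,\de]$) are the same in both arguments. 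Your approach is arguably the more elementary one for a first-order estimate like this; the paper's path-integral formulation would perhaps extend more mechanically to higher-order expansions, but that is not needed here. Your closing remark about the uniform (in $s$) bound $\De A(s)=O(\e\de^2)$ being the key point is exactly right and is implicit in the paper's use of $X=\Id+O(\de^2)$.
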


\begin{proof}
Let $g: GL(n) \rightarrow \sS(n)$ be given by $g(A) = (A^TA)^{-1}$.  We wish to compute $(\tA^T \tA)^{-1}(s) - (A^T A)^{-1}(s) = g(\tA(s)) - g(A(s))$, which we will do by integrating the derivative of $g$ along a path from $A(s)$ to $\tA(s)$.  Hence $$(\tA^T \tA)^{-1}(s) - (A^T A)^{-1}(s) = \int_0^1 D_X g (\De A(s)) \, dr,$$ where $X=(1-r)A(s) + r \tA(s) = A(s) + r \De A(s)$.  Let us compute $D_X g(Y)$ (we will apply this to $Y=\De A(s)$).  Write $g=i \circ h$, where $i(A)=A^{-1}$ and $h(A) = A^T A$.  Then 
\begin{align*}
D_X g(Y) &= D_{h(X)} i \circ D_X h(Y) \\
                  &= D_{X^T X} i (X^T Y + Y^T X) \\
                  &= -(X^T X)^{-1} (X^T Y + Y^T X) (X^T X)^{-1} \\
                  &= -(X^{-1} Y (X^T X)^{-1} + (X^T X)^{-1} Y^T (X^T)^{-1} ) \\
                  &= - \left[ (X^{-1} Y (X^T X)^{-1}) + (X^{-1} Y (X^T X)^{-1})^T \right],
\end{align*}
which is the symmetrization of $(X^{-1} Y (X^T X)^{-1})$.  Now, $X = A(s) + r \De A(s) = \Id + (A(s) - A(0)) + r \De A(s) = \Id + O(\de^2) + O(\e\de^2) = \Id + O(\de^2)$.  Then
\begin{align*}
X^{-1} Y (X^T X)^{-1} &= (\Id + O(\de^2))^{-1} Y (\Id + O(\de^2))^{-1} \\
                                      &= (\Id + O(\de^2)) Y (\Id + O(\de^2)) \\
                                      &= Y + O(\e\de^4),
\end{align*}
so that $\int_0^1 D_X g (\De A(s)) \, dr = - ( \De A(s) + \De A^T(s) ) + O(\e\de^4)$.

Similarly, for $f(A) = (A^T)^{-1}$, we have $(\tA^T)^{-1}(s) - (A^T)^{-1}(s) = \int_0^1 D_X f(\De A(s)) \, dr$ and
\begin{align*}
D_X f(Y) &= D_{T(X)} i \circ D_X T(Y) \\
                  &= D_{X^T}i (Y^T) \\
                  &= -(X^T)^{-1} (Y^T) (X^T)^{-1} = -(X^{-1} Y X^{-1})^T.
\end{align*}
Since $X = \Id + O(\de^2)$, this is $D_X f(Y) = -\De A^T(s) + O(\e\de^4)$.
\end{proof}

\subsubsection{Perturbation schema} \label{schema}
Let $g \in \sG_1$ and consider a length $1$ piece of geodesic $\g$.  Let $t_0$ be the time for which $R(t_0)$ has distinct eigenvalues, with separation $| \la_i - \la_j |\geq H(g)$; we consider the map $DP$ over an interval $[t_0, t_0+d]$ with $\de \ll d \ll 1$, e.g. $\de = d^2$.  For the following, we will make a time shift so that $t_0 = 0$, and work with the following particular set of Fermi coordinates.   Since $R$ is symmetric, $R(0)$ can be diagonalized by an orthogonal matrix $Q$ to $$Q^{-1} R(0) Q = diag(\la_1, \ldots, \la_n),$$ with $\la_i$ distinct by assumption.  Let $v_1, \ldots v_n$ be the eigenvectors for $R(0)$; we will write $DP(d)$ in Fermi coordinates based on this set of orthonormal vectors in $T_{\dg(0)}\Si_{0}$.  Note that the map $DP(t)$ for the original metric, for small enough $t$, is $DP(t) = \Id + O(t)$.  

For Perturbation IV, we need a finer description of $DP(d)$.  The matrix $R$ is not constant along $\g$, but since $g$ is a $C^3$ metric there is a constant $C$ such that $\Norm{R'} \leq C$ on $M$.  Then $$R(t) = R(0) + P(t),$$ where $\Norm{P(t)}_{C^1} \leq C$.  In particular, since $P(0)=0$, we have $\Norm{P(t)}_{C^0} \leq C t$ on $[0,d]$.  Then, using the definitions of $A$ and $B$, we have
$$A(t) = diag( 1-\frac{\la_1}{2} t^2, \ldots, 1-\frac{\la_n}{2} t^2 ) + O(t^3)$$
$$A'(t) = diag( \la_1 t, \ldots, \la_n t ) + O(t^2)$$
$$B(t) = diag( t-\frac{\la_1}{6} t^3, \ldots, t-\frac{\la_n}{6} t^3 ) + O(t^4)$$
$$B'(t) = diag( 1-\frac{\la_1}{2} t^2, \ldots, 1-\frac{\la_n}{2} t^2 ) + O(t^3).$$
Hence
$$DP(t) = \left[ \begin{array}{c c} \Id & 0 \\ 0 & \Id \end{array} \right] + \left[ \begin{array}{c c} 0 & \Id \\ I(\la) & 0 \end{array} \right] t + O(t^2),$$
where $I(\la) = diag(\la_1, \ldots, \la_n)$.

We will perform the following families of perturbations to $\sU_A$ along $\g$ (see Figure 3). Note that the factor of $d$ in Perturbations I, II, and III is to make the size of the perterbation effect the same as that for Perturbation IV.

\begin{figure}[h]
\centering
\includegraphics[width=4in]{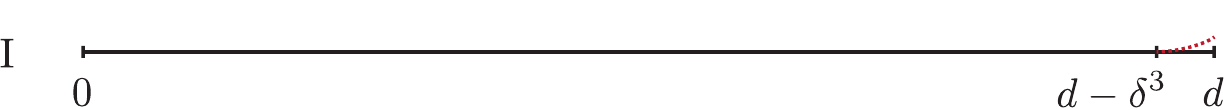} \\
\includegraphics[width=4in]{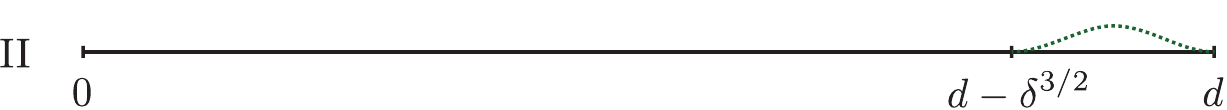} \\
\includegraphics[width=4in]{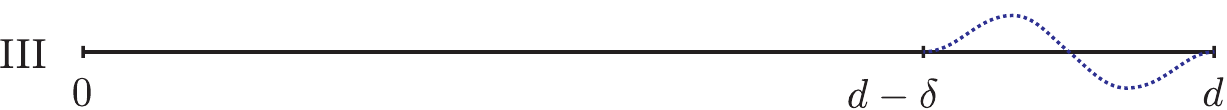} \\
\includegraphics[width=4in]{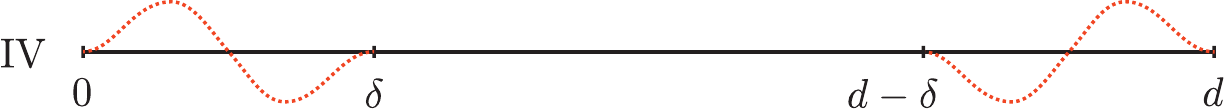}
\caption[Placement of perturbations I--IV]{Placement of perturbations I--IV.}
\end{figure}

\nin {\bf Perturbation I.}  For $1 \leq i \leq j \leq n$, let $$\tU_I^{ij}(t) = U_A(t) + d \cdot \psi^{ij}_1(t-(d-\de^3)).$$
Then, using the fact that $DP(t_1+t_2) = DP(t_2) DP(t_1)$,
\begin{align*}
\De_I^{ij} DP(d) &= \De DP(\de^3) DP(d-\de^3) \\
                       &= d \left[ \begin{array}{c c} O(\e\de^6) & O(\e\de^9) \\ \psi^{ij}_1(\de^3) + O(\e \de^9) & O(\e\de^6) \end{array} \right]  (\Id + O(d)) \\
                       &= \left[ \begin{array}{c c} 0 & 0 \\ d \cdot \psi^{ij}_1(\de^3) & 0 \end{array} \right] + O(\e \de^3 d^2).
\end{align*}

\nin {\bf Perturbation II.}  For $1 \leq i \leq j \leq n$, let $$\tU_{II}^{ij}(t) = U_A(t) + d \cdot \psi^{ij}_2(t-(d-\de^{3/2})).$$
Then
\begin{align*}
\De_{II}^{ij} DP(d) &= \De DP(\de^{3/2}) DP(d-\de^{3/2}) \\
                            &= d \left[ \begin{array}{c c} \Psi^{ij}_2(\de^{3/2}) + O(\e \de^6) & O(\e\de^{9/2}) \\ O(\e\de^{9/2}) & O(\e\de^3) \end{array} \right] (\Id + O(d)) \\
                            &= \left[ \begin{array}{c c} d \cdot \Psi^{ij}_2(\de^{3/2}) & 0 \\ 0 & * \end{array} \right] + O(\e\de^3 d^2),
\end{align*}
where the $*$ is an entry of $O(\e\de^3 d)$ (this block will not be used when we put coordinates on $Sp(n)$).

\nin {\bf Perturbation III.}  For $1 \leq i \leq j \leq n$, let $$\tU_{III}^{ij}(t) = U_A(t) + d \cdot \psi^{ij}_3(t-(d-\de)).$$
Then
\begin{align*}
\De_{III}^{ij} DP(d) &= \De DP(\de) DP(d-\de)\\
                           &= d \left[ \begin{array}{c c} O(\e\de^4) & \int_0^\de \Psi^{ij}_3(s) \, ds + O(\e\de^5) \\ O(\e\de^5) & O(\e\de^4) \end{array} \right] (\Id + O(d)) \\
                           &= \left[ \begin{array}{c c} 0 & d \cdot \int_0^\de \Psi^{ij}_3(s) \, ds \\ 0 & 0 \end{array} \right] + O(\e\de^3 d^2).
\end{align*}

The next perturbation makes use of the natural rotation of the dynamics when the curvature matrix $R$ has $n$ distinct eigenvalues. When $R \equiv 0$, for instance, the two ends of Perturbation IV cancel out and we are left with the original linear Poincar\'e map $DP$; however, because of the distinct eigenvalues of $R$, the effects of the initial perturbation rotate slightly before the end perturbation takes place, as in the Figure 4. This produces a perturbation with antisymmetric components to $A(d)$.
\begin{figure}[h]
\centering
\includegraphics[width=3in]{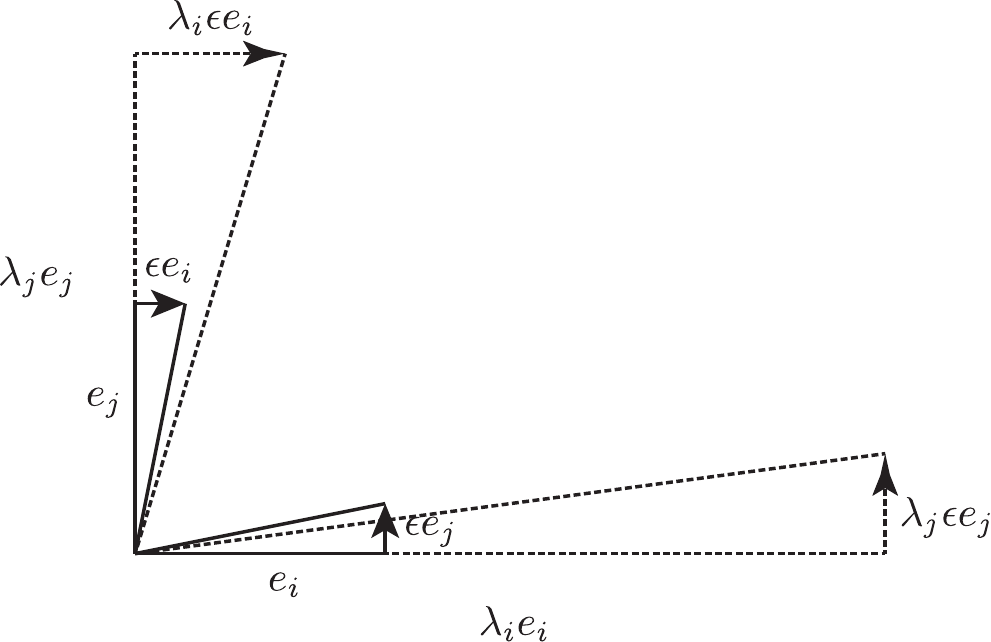}
\caption[Rotation of a perturbation as produced by the dynamics]{Rotation of a perturbation as produced by the dynamics. The vectors $e_i$ and $e_j$ are perturbed by $\e e_j$ and $\e e_i$, respectively, then allowed to flow along $\g$. The resulting vectors and perturbations are shown in dashed lines (here, $\la_i > \la_j$, and the rotation is toward $e_i$).}
\end{figure}

\nin {\bf Perturbation IV.}  For $1 \leq i < j \leq n$, let $$\tU_{IV}^{ij}(t) = U_A(t) + \psi^{ij}_{3}(t) - \psi^{ij}_{3}(t-(d-\de)).$$
Then, using Lemma \ref{III},
\begin{align*}
\De_{IV}^{ij} DP(d) &= DP(d-\de) \De DP(\de) - \De DP(\de) DP(d-\de) - \De DP(\de) DP(d-2\de) \De DP(\de) \\
                            &= DP(d-\de) \De DP(\de) - \De DP(\de) DP(d-\de) + O(\e^2\de^4) \\
                            &= \left( \left[ \begin{array}{c c} \Id & 0 \\ 0 & \Id \end{array} \right] + \left[ \begin{array}{c c} 0 & \Id \\ I(\la) & 0 \end{array} \right] d + O(d^2) \right) \De DP(\de) \\
                            & \hspace{1in} - \De DP(\de) \left( \left[ \begin{array}{c c} \Id & 0 \\ 0 & \Id \end{array} \right] + \left[ \begin{array}{c c} 0 & \Id \\ I(\la) & 0 \end{array} \right] d + O(d^2) \right) + O(\e^2\de^4) \\
                            &= d \left[ \begin{array}{c c} 0 & \Id \\ I(\la) & 0 \end{array} \right] \De DP(\de) - d \cdot \De DP(\de) \left[ \begin{array}{c c} 0 & \Id \\ I(\la) & 0 \end{array} \right] + O(\e\de^3 d^2) \\
                            &= d \left[ \begin{array}{c c} \De A' - \De B I(\la) & \De B' - \De A \\ (\De A - \De B') I(\la) & * \end{array} \right] + O(\e\de^3 d^2) \\
                            &= \left[ \begin{array}{c c} -d \cdot \int \Psi^{ij} I(\la) & 0 \\ 0 & * \end{array} \right] + O(\e\de^3 d^2),
\end{align*}
where the $*$ is an entry of $O(\e\de^3 d)$.  To write down $\int \Psi^{ij} I(\la)$, we can reduce to the $2 \times 2$ minor $\left[ \begin{array}{c c} a_{ii} & a_{ij} \\ a_{ji} & a_{jj} \end{array} \right]$, since all other entries are of higher order.  Then the $A$ component of the above matrix is
\begin{align*}
d\cdot \int \Psi^{ij} I(\la) &= \left[ \begin{array}{c c} 0 & \e\de^3 \\ \e\de^3 & 0 \end{array} \right] \cdot \left[ \begin{array}{c c} \la_i & 0 \\ 0 & \la_j \end{array} \right] \\
                                         &= \left[ \begin{array}{c c} 0 & \la_j \e\de^3 d \\ \la_i \e\de^3 d & 0 \end{array} \right],
\end{align*}
which is not symmetric when $\la_i \not= \la_j$ and can be decomposed into symmetric and anti-symmetric parts as
\begin{align*}
d\cdot \int \Psi^{ij} I(\la) &= \sS^{ij}(\la) + \sA^{ij}(\la) \\
                                          &= \e\de^3 d \left[ \begin{array}{c c} 0 & \frac{1}{2}(\la_i + \la_j) \\ \frac{1}{2}(\la_i + \la_j) & 0 \end{array} \right] 
+ \e\de^3 d \left[ \begin{array}{c c} 0 & \frac{1}{2}(\la_j -\la_i) \\ \frac{1}{2}(\la_i - \la_j) & 0 \end{array} \right].
\end{align*}

\subsubsection{An open ball in $Sp(n)$}
Writing an element of $Sp(n)$ as 
$$\left[ \begin{array}{c c} A_{n\times n} & B_{n\times n} \\ A'_{n\times n} & B'_{n\times n} \end{array} \right],$$
consider the following coordinates on $Sp(n)$:
\begin{center}
\begin{tabular}{c l}
$\{ a'_{ij} + a'_{ji} \}$            & for $1 \leq i \leq j \leq n$ \\
$\{ a_{ij} + a_{ji} \}$             & for $1 \leq i \leq j \leq n$ \\
$\{ b_{ij} + b_{ji} \}$              & for $1 \leq i \leq j \leq n$ \\
$\{ a_{ij} - a_{ji} \}$               & for $1 \leq i < j \leq n$. 
\end{tabular}
\end{center}
Let $\sR(\g)$ be the space of curvature matrices along $\g$. Then $s \mapsto R + s \De R^{ij}_X$ gives a curve in $\sR(\g)$ through $R$ for each of the curvatures $\De R^{ij}_X$ ($X \in \{ I, II, III, IV \}$) produced in the above perturbations. These define a $(2n^2+n)$--dimensional subspace $S \subset \sR(\g)$. Consider the map $\Phi: \sR(\g) \rightarrow Sp(n)$ that takes a curvature matrix along $\g$ and returns the linear Poincar\'e map along $\g$ with the given curvature.  Using the above calculations, its derivative is given by
$$D\Phi |_{S} R = \left[ \begin{array}{c c c c} 2\psi_1^{ij} & 0 & 0 & 0 \\ 0 & 2\Psi_2^{ij} & 0 & 2\sS^{ij}(\la) \\ 0 & 0 & -4 \int \Psi_3^{ij} & 0 \\ 0 & 0 & 0 & 2\sA^{ij}(\la) \end{array} \right] + O(\e\de^3 d^2),$$
which for $\de$ small enough has full rank and therefore, by the Inverse Function Theorem, $\Phi |_S$ is a diffeomorphism.  In particlar, the image of a neighborhood of $R$ under $\Phi |_S$ contains a ball of radius $\de>0$ about $DP=DP(\g,g)$ in $Sp(n)$.  Since all constants in the above calculations depend only on the original metric $g$, the value of $H(g)$, and the size of $\De R$ (determined by the neighborhood $\sU$), $\de$ depends only on $g$ and $\sU$ (and is uniform over the geodesic $\g$).

This shows that we can perturb $DP(\g,g,t_0,t_0+d)$ in a ball of uniform size.  Since 
$$DP(\g,g,0,1) = DP(\g,g,t_0+d,1) \cdot DP(\g,g,t_0,t_0+d) \cdot DP(\g,g,0,t_0)$$
and the size of $DP(\g,g,a,b) \in Sp(n)$ is uniformly bounded above and below for $[a,b] \subset [0,1]$, this also shows that we can perturb $DP(\g,g,0,1)$ in a ball of uniform size.

\subsection{Perturbing $R$ by perturbing $g$}
In this section, for any one-parameter family of curvature matrices $\tR(t) = R(t) + \De R(t)$ that are $C^0$-close to $R(t)$, we define a metric $\tg$ supported in a tubular neighborhood $W$ of $\g$, show that it has Jacobi curvature matrix $\tR$ along $\g$, show that $\tg$ is $C^2$ close to $g$, and that this distance is independent of $W$.  Recall that in Fermi coordinates, $$\frac{\del}{\del x^i} \frac{\del}{\del x^j} g_{00}(t; 0) = - 2 R_{iooj}(t;0),$$ and that $R_{iooj}(t;0)$ are the components $R_{ij}(t)$ of the Jacobi curvature matrix (\cite{Kl78}).  Define a new metric $g^1$ in these coordinates by
$$g^1_{ij}(t;x) = \begin{cases} g_{00}(t;x) -2 \De R_{kl}(t) x^k x^l & \text{if} \, i=j=0 \\ g_{ij}(t;x) & \text{otherwise}. \end{cases}$$
Let $\vphi: \R^n \rightarrow \R$ be a $C^2$ bump function such that 
$$\vphi(x)=\begin{cases} 1 & \Norm{x} < 1/4 \\ 0 & \Norm{x} > 1, \end{cases}$$
and set $\vphi_\eta(x) = \vphi (x/\eta)$.  For the tubular neighborhood  $W = [0,1] \times (-\eta,\eta) \subset U$, define a new metric $\tg$ with $\supp (\tg - g) \subset W$ by
$$\tg(t;x) = \vphi_\eta(x) g^1(t;x) + (1-\vphi_\eta(x)) g(t;x).$$
Then
\begin{center}
\begin{tabular}{l l}
$\norm{\vphi_\eta(x)}_{C^0} = \norm{\vphi(x)}_{C^0}$                      & $\norm{\De R_{kl} x^k x^l}_{C^0, W} = \eta^2 \norm{\De R}$ \\
$\norm{\vphi_\eta(x)}_{C^1} \leq \eta^{-1}\norm{\vphi(x)}_{C^1}$   & $\norm{\De R_{kl} x^k x^l}_{C^1, W} = \eta \norm{\De R}$ \\
$\norm{\vphi_\eta(x)}_{C^2} \leq \eta^{-2}\norm{\vphi(x)}_{C^2}$   & $\norm{\De R_{kl} x^k x^l}_{C^2, W} = \norm{\De R}$,
\end{tabular}
\end{center}
so that
\begin{align*}
\Norm{\tg-g}_{C^2} &= \Norm{\vphi_\eta 2 \De R_{kl}x^k x^l}_{C^2} \\
                                   &\leq \Norm{\vphi_\eta(x)}_{C^0} \Norm{2 \De R_{kl}x^k x^l}_{C^2, W} + 2 \Norm{\vphi_\eta(x)}_{C^1} \Norm{2 \De R_{kl}x^k x^l}_{C^1, W} \\
                                   & \hspace{1cm} + \Norm{\vphi_\eta(x)}_{C^2} \Norm{2 \De R_{kl}x^k x^l}_{C^0, W} \\
                                   &\leq 8 \Norm{\vphi}_{C^2} \Norm{\De R}_{C^0},
\end{align*}
which does not depend on $\eta$.  Hence $\Norm{\tg-g}_{C^2} \leq O(\e)$ so that $\tg$ is a $C^2$-small perturbation of $g$.

The argument for avoiding perturbing the metric around a finite number of transverse geodesics follows the same lines as in Section \ref{FLGF-SS}.

\section*{Acknowledgements}
The author is grateful to Amie Wilkinson and Keith Burns for many valuable conversations, and also thanks Charles Pugh for useful discussions regarding this work.

\bigskip
{\small \textit{E-mail address:} \tt{davissch@umich.edu}} 

\end{document}